%
%
%
%
%
%
%
%
%
%
%
%
%
%
\documentclass[num-refs]{wiley-article}




\usepackage{amsmath}
\usepackage{graphicx}

\usepackage{mathabx}
\usepackage{tikz}
\usepackage{enumitem}
\usepackage{todonotes}
\usepackage{cleveref}
\usepackage{multirow}
\usepackage{stackengine}

\setlist[enumerate]{noitemsep, nolistsep}
\setlist[itemize]{noitemsep, nolistsep}

\newtheorem{conjecture}{Open problem}

\papertype{Original Article}
\paperfield{Journal Section}

\title{The Game Chromatic Number of Complete Multipartite Graphs with No Singletons}


\author[1]{Pawe\l~Obszarski}
\author[2]{Krzysztof Turowski}
\author[2]{Hubert Zięba}

\affil[1]{Department of Algorithms and System Modeling, Gdansk University of Technology, Gdańsk, 80-233 Poland}
\affil[2]{Theoretical Computer Science Department, Jagiellonian University, Kraków, 30-348 Poland}

\corraddress{Krzysztof Turowski, Theoretical Computer Science Department, Jagiellonian University, Krakow, 30-348 Poland}
\corremail{krzysztof.szymon.turowski@gmail.com}

\fundinginfo{Krzysztof Turowski work was supported by Polish National Science Center $2018$/$31$/B/ST$6$/$01294$ grant.}

\runningauthor{Pawe\l~Obszarski et al.}

\begin{document}

\begin{frontmatter}
\maketitle

\begin{abstract}
In this paper we investigate the game chromatic number for complete multipartite graphs, an important class of dense graphs.

Several strategies for Alice and one strategy for Bob are proposed. We prove their optimality for all complete multipartite graphs with no singletons, which is a step us very close to a solution for all complete multipartite graphs.
The strategies presented are computable in linear time, and the values of the game chromatic number depend directly only on the number and the sizes of sets in the partition.

\keywords{Chromatic games, game chromatic number, complete multipartite graphs, graph coloring}
\end{abstract}
\end{frontmatter}

\section{Introduction}

The origins of the map-coloring game can be traced to Scientific American $1981$ article \cite{gardner}, but it has been analyzed
extensively only since it was reinvented by Bodlaender a decade later \cite{bodlaender} as the game played on graphs. As for today,
there are many generalizations and variations of the graph coloring game, depending on what exactly is colored and what are
the additional constraints on the graph structure, admissible coloring, etc. For more examples, see the survey \cite{map-coloring},
covering some variants, techniques, and results for these problems.

The standard version of the graph coloring game is played between Alice and Bob on a graph $G$ with a
set $C$ of $k$ colors, with $k$ fixed. We say that color $c\in C$ is \emph{legal} for a vertex $v\in V(G)$ if no neighbor
of $v$ is colored with $c$. The game proceeds with Alice and Bob taking subsequent turns and coloring any uncolored vertex
with a legal color until the entire graph is colored or there are no legal colors for all uncolored vertices. Alice wins in
the former case and Bob in the latter.

The game chromatic number of a graph $G$, denoted by $\chi_g(G)$, is defined as the minimum $k$ such that there exists a
winning strategy for Alice, that is, it is certain that the entire graph will be colored regardless of strategy of Bob.
This parameter is well-defined because Alice always wins if $C$ contains at least as many colors as there are vertices of
$G$.

\subsection{Previous results}

The graph coloring game was studied by many authors.
In the case of forests $\mathcal{F}$ we know that $\max\{\chi_g(G)\colon G\in\mathcal{F}\}\leq 4$, and that this bound is tight \cite{kierstead-interval}. There is also known a polynomial algorithm for deciding whether $\chi_g(F) = 2$ for a given forest $F$ \cite{toci2015game} or finding the exact value of $\chi_g(G)$ for caterpillars \cite{furtado2019caterpillars}, however the computational complexity of computing the value of $\chi_g(F)$ is still unknown.

For the class of planar graphs $\mathcal{P}$ it was proved
in \cite{kierstead-planar,zhu-planar} that $8\leq\max\{\chi_g(G)\colon G\in\mathcal{P}\}\leq 17$. For the class of outerplanar
graphs $\mathcal{OP}$, it was shown in \cite{zhu-outerplanar} that $6\leq\max\{\chi_g(G)\colon G\in\mathcal{OP}\}\leq 7$. For
$k$-trees $\mathcal{KT}$ it is the case that $\max\{\chi_g(G)\colon G\in\mathcal{OP}\}=3k+2$ for $k\geq 2$ (see
\cite{zhu-ktrees}). Similarly, it was shown in \cite{sidorowicz} that $\max\{\chi_g(G)\colon G\in\mathcal{C}\}=5$ for cacti
graphs $\mathcal{C}$. Finally, for interval graphs it was proved in \cite{kierstead-interval} that
$\chi_g(G)\leq 3\omega(G)-2$, where $\omega(G)$ is the clique number of $G$, and that there are examples of graphs with
$\chi_g(G)\geq 2\omega(G)-1$.

The bounds on the game chromatic number were also studied for various products graphs, most notably Cartesian product graphs \cite{peterin2007game,bartnicki2008game,zhu2008game,sia2009game}, direct product graphs \cite{alagammai2016game}, and most recently strong product graphs \cite{enomoto2023game}.

In another line of research, in \cite{dinski1999bound} the value of the game chromatic number for any graph can be bounded by the function acyclic chromatic number $\chi_a(G)$, i.e. the minimum number of colors such that there exists a coloring where each pair of colors induce an acyclic graph in $G$.
Moreover, for random graphs $G_{n,p}$ it was proved in \cite{bohman2008game,frieze2013game,keusch2014game} that with high probability the game chromatic number is within a multiplicative range of the chromatic number of the graph $\chi(G)$.

\subsection{Our results}

As it is clear from the above survey, the graph coloring game was mostly studied for certain classes of graphs. In particular, the graphs were often very sparse and with a small value of the chromatic number.
In this paper we go in a different direction and focus on a dense class of graphs, but the one that has well-known value of the chromatic number, i.e.\ complete multipartite graphs.

This class was initially investigated in passing by Dunn in  \cite{dunn2012complete}, where it was proved that
\begin{theorem}{\cite[Theorem 1]{dunn2012complete}}
For any complete $k$-partite graph $K_{r, \ldots, r}$ it holds that:
\begin{align*}
    \chi_g(K_{r, \ldots, r}) =
    \begin{cases}
        k & \text{if $k = 1$,} \\
        2 k - 2 & \text{if $k = 3$ and $r \ge 3$,} \\
        2 k - 1 & \text{otherwise.}
    \end{cases}
\end{align*}
\end{theorem}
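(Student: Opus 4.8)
\emph{Overview of the plan.} The plan is to reduce the coloring game on $K_{r,\dots,r}$ to an elementary counting race between two quantities and then exhibit matching strategies for Alice and Bob. I treat $r=1$ (where $K_{r,\dots,r}=K_k$, so $\chi_g=k$) and $k=1$ (edgeless graph, $\chi_g=1$) as degenerate, and focus on $k\ge 2$, $r\ge 2$. The reduction: since vertices in distinct parts are adjacent, a colour used on a part $V_i$ is illegal everywhere outside $V_i$, so each colour is confined to a single part. Consequently (i) a part that contains a coloured vertex never acquires a stuck vertex, its own colour staying legal there; (ii) a part with no coloured vertex has a stuck vertex exactly once all $m$ colours are used; (iii) colouring a vertex in an empty part forces a previously unused colour, as that vertex sees every other part. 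Writing $s$ for the number of non-empty parts and $u$ for the number of colours used, the game becomes equivalent to: \emph{Alice wins iff $s$ reaches $k$; Bob wins iff $u$ reaches $m$ while $s<k$}, where an ``activation'' (a move creating a new non-empty part) raises both $s$ and $u$ by $1$, and every other move raises $u$ by $0$ or $1$.

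\emph{Upper bound $\chi_g\le 2k-1$.} With $m=2k-1$ colours Alice activates a new part on each of her turns $1,3,\dots,2k-1$: an unused colour is always available (before her $i$-th activation fewer than $2k-1$ colours are in use), after move $2k-1$ all $k$ parts are non-empty, and throughout the game $u\le(\text{moves played})\le 2k-1=m$, so Bob never attains $u=m$ with $s<k$. One must check that Bob cannot instead force a loss earlier or stall Alice; this follows from the two inequalities $u\le(\text{moves played})$ and ``(activations so far)$=s$'' noted above.

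\emph{Lower bound and the exceptional value $2k-2$.} With $m=2k-2$ Bob plays ``burn a colour'': on each turn colour a free vertex of some non-empty part with an unused colour if one is available, and otherwise he is forced to make an activation (which also burns a colour). A short count shows that if Alice rushes she reaches $s=k$ only at move $2k-1$, while Bob's burns already give $u=2k-2=m$ with $s=k-1$ by move $2k-2$; and deviating from rushing only delays $s=k$. The single escape for Alice is to make \emph{every} non-empty part full on Bob's turn, forcing Bob to do an activation himself, i.e.\ to spend a Bob-turn without a burn. One checks this escape wins Alice nothing when $k=2$ (only two moves are played) nor when $r=2$ (a freshly activated part has a single free slot, which Bob's own burn fills), so $\chi_g=2k-1$ there; in the exceptional range Alice activates $V_1$, then spends her turns filling $V_1$ to capacity (one extra stalling move, at the threshold size $r=3$), after which $V_1$ is full and Bob must activate $V_2$ himself, whereupon Alice reverts to one activation per turn — the one burn she has denied Bob being exactly the colour separating $2k-1$ from $2k-2$, and one also verifies Bob still wins with $2k-3$ colours there, giving $\chi_g=2k-2$.

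\emph{Main obstacle.} The structural reduction and the two ``pure'' strategies above are routine; the substance of the proof — and the point at which the value splits between $2k-1$ and $2k-2$ — is establishing optimality against \emph{all} lines of play, balancing Alice's option of filling a part to deny Bob a colour-burn against Bob's freedom to choose where to burn and when he is forced to activate. This delicate bookkeeping, sensitive to the exact size of $r$, is where one must show the denial manoeuvre gains Alice a full colour precisely in the exceptional range and never gains more than one colour anywhere.
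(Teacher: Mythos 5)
Your reduction to the race between $s$ (the number of non-empty parts) and $u$ (the number of colours used) is sound and is exactly the ``fixing move'' viewpoint this paper works with; your two strategies (Alice activates a part per turn; Bob burns a fresh colour in a non-empty part) are in substance the strategies $(A1)$/$(A2)$ and $(B1)$ by which the paper's own results (\Cref{lem:a1,lem:a2,lem:b1-main,lem:b1-3}) recover the equal-parts case. The problem is that everything that actually decides between $2k-1$ and $2k-2$ is deferred with ``one checks'' or ``a short count shows'', and two of these deferrals hide real work. First, Bob's burn strategy is underspecified: \emph{where} he burns matters, since Alice can try to steer him into parts whose colours she can later reuse. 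The paper's $(B1)$ pins this down (respond in the part Alice just played in; otherwise in a nearly-full partially coloured part), and its lower-bound proofs rest on a genuine invariant --- at any time at most one started part has a single vertex coloured solely by Bob, and every other started part already carries at least two colours (three in one designated part) --- which is what yields the count $2k-2$ before the fixing move. You assert that count without establishing any such invariant. Second, in the exceptional case you correctly sense that the denial manoeuvre hinges on Alice being able to \emph{finish} the part she opens in one extra move, i.e.\ on a part of size exactly $3$, but you neither prove that it fails for larger parts (Alice cannot fill a size-$4$ part before Bob escapes with his burns) nor that Bob still wins with $2k-3$ colours when it succeeds.

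This second point exposes a discrepancy you should not pass over. Your own analysis places the exceptional value $2k-2$ at ``threshold size $r=3$'' for all $k\ge 3$, whereas the statement you are proving puts it at ``$k=3$ and $r\ge 3$''. These disagree already at $K_{4,4,4}$ (the printed statement demands $4$ colours, while your manoeuvre gains Alice nothing there and \Cref{col:b1-main} forces $5$) and at $K_{3,3,3,3}$ (the printed statement demands $7$, while \Cref{col:a2} gives $6$). The statement as printed has the roles of $k$ and $r$ transposed relative to the rest of the paper; the version consistent with \Cref{col:a2,col:b1-3,col:b1-main} reads $k$ when $r=1$, $2k-2$ when $r=3$ and $k\ge 3$, and $2k-1$ otherwise. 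Your sketch is aimed at that corrected theorem, but as a proof of the quoted one it would fail, and as a proof of the corrected one it is missing its core case analysis.
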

There was also a poster \cite{dunn-poster} in which there was a formula (although without any proof) for complete $k$-partite graphs $K_{r_1,\ldots, r_k}$ with either all $r_i \in \{1, 2\}$ or all $r_i \in \{1, 3\}$ -- accompanied with a statement (though disappointingly without any reference to any proof or formula) that $\chi_g(K_{r_1,\ldots, r_k})$ is known for all graphs with all $r_i \in \{1, 2, 3\}$.

In this paper, we make a progress towards a solution to the problem for all complete $k$-partite graphs. We provide several strategies for Alice and a single optimal strategy for Bob, and we prove their optimality for multipartite graphs with no singletons (see \Cref{tab:results} for a summary). All our strategies are computable in polynomial time and additionally, they all lead to simple closed formulas for the game chromatic number in terms depending on the structure of the graphs.

\begin{table}[htbp]
\centering
\caption{The summary of our results}
\label{tab:results}
\begin{tabular}{|c|c|c|c|c|c|}
\hline
$k$ & $r_i$ & $n$ & $\chi_g(K_{r_1, \ldots, r_k})$ & Upper bound & Lower bound \\ \hline
$1$ & any & any & $1$ & \multicolumn{2}{c|}{obvious} \\ \hline
$2$ & $r_2 = 1$ & any & $2$ & \multicolumn{2}{c|}{obvious} \\ \hline
$2$ & $r_2 \ge 2$ & any & $3$ & \Cref{col:a1} & \Cref{col:b1-main} \\ \hline
$\ge 3$ & \Centerstack{$\exists_jr_j=3$ $r_k=2$} & even & $2 k - 2$ & \Cref{col:a2} & \Cref{col:b1-3} \\ \hline
$\ge 3$ & \Centerstack{$\forall_jr_j\neq3$ $r_k=2$} & even & $2 k - 1$ & \Cref{col:a1} & \Cref{col:b1-2} \\ \hline
$\ge 3$ & \Centerstack{$\exists_jr_j=3$ $r_k=2$} & odd & $\min\left\{2 k - 2, \sum_j \left\lceil\frac{r_j}{2}\right\rceil\right\}$ & \Centerstack{\Cref{col:a2} \Cref{col:a3}} & \Cref{col:b1-3} \\ \hline
$\ge 3$ & \Centerstack{$\forall_jr_j\neq3$ $r_k=2$} & odd & $\min\left\{2 k - 1, \sum_j \left\lceil\frac{r_j}{2}\right\rceil\right\}$ & \Centerstack{\Cref{col:a1} \Cref{col:a3}} & \Cref{col:b1-2} \\ \hline
$\ge 3$ & $r_k = 3$ & any & $2 k - 2$ & \Cref{col:a2} & \Cref{col:b1-3} \\ \hline
$\ge 3$ & $r_k \ge 4$ & any & $2 k - 1$ & \Cref{col:a1} & \Cref{col:b1-main} \\ \hline
\end{tabular}
\end{table}

\subsection{Notation and concepts}

Throughout this paper we denote by $K_{r_1, \ldots, r_k}$ a complete $k$-partite graph on $n = \sum_{i = 1}^k r_i$ vertices with partition into disjoint independent sets $V_1$, $V_2$, \ldots, $V_k$ of cardinalities $r_1$, $r_2$, \ldots, $r_k$.
Without loss of generality, we assume that $r_1 \ge r_2 \ge \ldots \ge r_k$.
For convenience we also assume that we pick a partition with minimum $k$, i.e. we consider complete graphs as $K_r$, but not as $K_{1, 1, \ldots, 1}$.
This ensures that if $k \ge 2$, then it follows that $r_1 \ge 2$.
Let us also denote by $l_j = |\{i\colon r_i = j: i = 1, \ldots, k\}|$ the count of sets $V_i$ of size exactly $j$.
Finally, we can assume that the new colors appear in the game in order $1$, $2$, \ldots.

Moreover, we will call a set with all vertices colored, no vertices colored, or some (but not all) vertices colored a \emph{fully colored}, \emph{uncolored}, or \emph{partially colored set}, respectively.
We will also say that a player \emph{started} coloring a set $V_i$ or, equivalently, that $V_i$ was started by that player if he or she colored the first vertex in this set.

Let us also denote the move after which every set $V_i$ has at least one vertex colored as the \emph{fixing move}. This concept is crucial in our investigations of the effectiveness of the strategies we proposed for Alice and Bob.

The importance of the above concepts for the chromatic game on complete multipartite graphs is directly supported by the following intuitive argument:
if Alice has a strategy such that she and Bob always use only at most $l$ colors up to a moment when all $V_i$ are partially or fully colored (i.e. until the fixing move), then a set of $l$ colors is sufficient to color the whole graph. Bob always can be forced to repeat the colors from this point on since by the structure of $K_{r_1, \ldots, r_k}$ if color $c$ is used in
any set $V_i$, then it remains feasible for all other vertices in this
set for the rest of the game -- and it is forbidden for all other vertices.
Therefore $\chi_g(K_{r_1, \ldots, r_k}) \le l$.

On the other hand, if Bob has a strategy so the players always use $l$ colors while there still remains at least one uncolored set $V_i$ (i.e. before the fixing move), then $\chi_g(K_{r_1, \ldots, r_k}) > l$.

\section{Strategies for Alice}

First, we will introduce two simple, but powerful strategies for Alice: 
\begin{definition}
  Let $(A1)$ be the following strategy for Alice: in any move, pick any vertex in any uncolored set $V_i$ and assign to it a new color. Otherwise, pick any vertex in any partially colored set $V_i$ and use a color that was already used for some vertex in $V_i$.
\end{definition}

\begin{definition}
  Let $(A2)$ be the following strategy for Alice for $K_{r_1, \ldots, r_k}$ with $k \ge 2$ and $r_j = 3$ for some particular (fixed) $j$:
  \begin{enumerate}
      \item in the first move pick an uncolored vertex from $V_j$ and assigns to it a new color,
      \item otherwise, if Bob played in his last move at a vertex in $V_j$, then pick another uncolored vertex from $V_j$ and repeat a color just used by Bob,
      \item otherwise, if there is an uncolored set $V_i$, then pick any uncolored vertex from $V_i$ and assign a new color to it,
      \item otherwise, pick any uncolored vertex from a partially colored set $V_i$ and assign to it a color that was already used for some vertex in $V_i$.
  \end{enumerate}
\end{definition}

Now we proceed to simple bounds on the number of colors necessary to finish the game on the whole graph:
\begin{lemma}
  \label{lem:a1}
  If Alice uses $(A1)$, then she and Bob can color the whole graph $K_{r_1, \ldots, r_k}$ using any set of at least $2 k - 1$ colors.
\end{lemma}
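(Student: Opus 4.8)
The plan is to track the number of colors in play via the notion of the fixing move, and to show that strategy $(A1)$ keeps this count at most $2k-1$. The key invariant I would maintain is the following: after each of Alice's moves, \emph{the number of distinct colors used so far is at most twice the number of sets $V_i$ that have been started}. To see why this suffices, observe that once every set $V_i$ is started (i.e. after the fixing move), by the intuitive argument recorded in the excerpt no further new colors are ever needed — any uncolored vertex lies in a partially or fully colored set and may reuse a color already present in that set. So it is enough to bound the color count at the fixing move; at that point all $k$ sets are started, and the invariant would give a bound of $2k$, which I then need to sharpen to $2k-1$.

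The sharpening comes from analyzing who makes the fixing move. First I would argue that under $(A1)$, Alice never introduces a new color once all sets are started: her rule directs her to play a new color only when an uncolored set exists, and otherwise to reuse a color inside a partially colored set. Hence the only way the color count can reach $2k$ is if Bob makes the fixing move and does so by introducing a fresh color into the last untouched set. I would show this is impossible: consider the move just before the fixing move. At that point exactly one set, say $V_i$, is uncolored and all others are started. It is Alice's rule that, whenever an uncolored set exists, she plays in it with a new color; so on her turn she would start $V_i$ herself. Therefore Bob can only be the one to make the fixing move if it is his turn while $V_i$ is still the unique uncolored set — but that situation arises only immediately after an Alice move in which she did \emph{not} start $V_i$, contradicting her strategy unless $V_i$ was already started. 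A cleaner way to close this: I would induct on moves, showing that after every move of Alice the number of started sets is at least $\lceil(\text{\# colors used})/2\rceil$ is too weak; instead I track that \emph{after Alice's move, every started set is partially or fully colored and the color count is at most $2\cdot(\text{\# started sets}) - 1$}, the $-1$ coming from Alice's very first move, which starts a set with color $1$ (one color, one started set). Bob's move can add at most one new color and at most one new started set, preserving the bound $\le 2s-1$ where $s$ counts started sets after Bob; Alice's reply either starts a new set with a new color (raising both sides consistently) or adds no color at all. Carrying this bookkeeping through, at the fixing move $s=k$ and the color count is at most $2k-1$.

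Concretely, the steps in order are: (1) formalize the invariant ``$\#\text{colors} \le 2\cdot\#\text{started sets} - 1$ holds after every move'' and verify the base case (Alice's first move: one color, one started set); (2) check the inductive step across a Bob move followed by an Alice move, using that $(A1)$ reuses a color whenever no uncolored set remains and introduces exactly one new color when it starts a new set; (3) conclude that at the fixing move at most $2k-1$ colors are in use; (4) invoke the structural fact that from the fixing move onward no new color is ever needed, so $2k-1$ colors suffice to finish, and note $(A1)$ remains well-defined and legal throughout (a reused color in $V_i$ is always legal since $V_i$ is independent).

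The main obstacle I anticipate is step (2): I must handle the bookkeeping carefully when a Bob move and the following Alice move interact — e.g. Bob starts a new set with a new color, then Alice also starts yet another new set with another new color — and confirm that the worst case over all of Bob's choices still respects $\le 2s-1$. One has to be slightly careful about parity and about the last few moves near the fixing move (what if the fixing move itself is made by Alice, versus by Bob), but in every case Alice's rule of preferring to start uncolored sets, combined with never wasting a new color afterward, pins the count at $2k-1$.
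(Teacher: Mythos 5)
Your overall plan (bound the number of colors used up to the fixing move, then observe that from that point on every uncolored vertex can legally reuse a color already present in its own part) is exactly the paper's plan, but two of your specific claims are false as stated. First, the invariant you propose to ``formalize'', namely that $\#\text{colors} \le 2\cdot\#\text{started sets} - 1$ holds \emph{after every move}, fails already on move two: Alice colors a vertex of $V_1$ with color $1$, Bob colors a second vertex of $V_1$ with color $2$; now two colors are in use but only one set is started, and $2 \not\le 2\cdot 1 - 1$. The invariant is only true after \emph{Alice's} moves (after a Bob move you can only guarantee $\le 2s$), so your inductive step ``Bob's move \ldots preserving the bound $\le 2s-1$'' is wrong and must be replaced by a two-move (Bob-then-Alice) step, plus a separate check at the fixing move. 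Second, your claim that Bob cannot make the fixing move is also wrong: if two sets are uncolored before Alice's turn, she starts one of them under $(A1)$, leaving exactly one uncolored set, and Bob may then start it. This does not hurt the final count --- after Alice's move $s=k-1$ and $c\le 2k-3$, so Bob's fixing move brings the total to at most $2k-2$ --- but the impossibility argument you sketch in your second paragraph is not a valid way to close the case.

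With those repairs your accounting does yield the bound, but note that the paper's proof is far more direct and avoids the invariant entirely: under $(A1)$ Alice starts a previously uncolored set on every one of her moves while any remains, so all $k$ parts are started after at most $2k-1$ total moves ($k$ by Alice, $k-1$ by Bob interleaved); each move introduces at most one new color, so at most $2k-1$ colors appear by the fixing move, and thereafter both players can always legally reuse a color already present in the relevant part, so the game never gets stuck. You may find it cleaner to argue by counting moves rather than maintaining a per-move inequality that, as you yourself anticipate in your last paragraph, is delicate precisely where Bob injects a new color into an already-started set.
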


\begin{proof}
  Clearly, after at most $2 k - 1$ moves in total ($k$ by Alice, $k - 1$ by Bob) every $V_i$ contains at least one colored vertex. If a color set contains at least $2 k - 1$ colors, then Alice and Bob are always able to finish coloring the rest of the graph, as they can always reuse colors that already appear in the same $V_i$.
\end{proof}

\begin{corollary}
  \label{col:a1}
  For any graph $K_{r_1, \ldots, r_k}$ it holds that $\chi_g(K_{r_1, \ldots, r_k}) \le 2 k - 1$.
\end{corollary}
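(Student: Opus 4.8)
The plan is to deduce \Cref{col:a1} immediately from \Cref{lem:a1}. Recall that $\chi_g(K_{r_1,\ldots,r_k})$ is by definition the least number $k'$ of colours for which Alice has a strategy guaranteeing that the whole graph eventually receives a legal colouring, whatever Bob does. \Cref{lem:a1} exhibits precisely such a strategy, namely $(A1)$, valid for every colour set of size at least $2k-1$. Hence $2k-1$ (and, by the ``any set of at least $2k-1$ colors'' phrasing of the lemma, every larger value as well) lies in the set over which this minimum is taken, and therefore $\chi_g(K_{r_1,\ldots,r_k}) \le 2k-1$.

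The only point worth making explicit is that the wording ``she and Bob can color the whole graph'' in \Cref{lem:a1} really is the assertion that $(A1)$ is a \emph{winning} strategy for Alice: $(A1)$ is a fully specified recipe that does not consult Bob's plans, and the argument proving \Cref{lem:a1} shows that following it always terminates in a completely coloured graph regardless of Bob's moves. So in the proof I would just recall $(A1)$, cite \Cref{lem:a1}, and invoke the definition of the game chromatic number to conclude.

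There is essentially no obstacle here; all the substance is already in \Cref{lem:a1}. If anything, the single thing to keep in mind is that $(A1)$ never calls on Alice to make an impossible or illegal move: while the graph is not yet fully coloured, either some set $V_i$ is still uncoloured (and a fresh colour is available, since this can only happen within the first $k$ moves of Alice and at most $2k-2$ colours have been used before her move, leaving at least one unused among the $\ge 2k-1$ available) or every uncoloured vertex sits in a partially coloured set, in which case reusing a colour already present in that set is legal because no two vertices of $V_i$ are adjacent. Both facts are immediate from the structure of complete multipartite graphs and were already used in \Cref{lem:a1}, so nothing further is needed.
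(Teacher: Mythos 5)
The proposal is correct and matches the paper's (implicit) argument exactly: the corollary is stated as an immediate consequence of \Cref{lem:a1}, and you derive it in precisely that way, with the added (correct but not strictly necessary) verification that $(A1)$ always prescribes a legal move. Nothing further is needed.
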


It turns out that Alice can ensure saving one color if the graph contains at least one part with exactly three vertices:
\begin{lemma}
  \label{lem:a2}
  If $K_{r_1, \ldots, r_k}$ has $k \ge 3$ and $r_j = 3$ for some $j = 1, 2, \ldots k$ and Alice uses $(A2)$, then she and Bob can color the whole graph $K_{r_1, \ldots, r_k}$ using any set of at least $2 k - 2$ colors.
\end{lemma}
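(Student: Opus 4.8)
The plan is to use the reduction already highlighted in the introduction: it suffices to show that, when Alice plays $(A2)$, at most $2k-2$ distinct colours have been used by the \emph{fixing move}, i.e.\ by the moment every $V_i$ contains a coloured vertex. After that moment every uncoloured vertex lies in a partially coloured set $V_i$, and since in $K_{r_1,\ldots,r_k}$ a colour appearing in $V_i$ appears in no other part, a colour already used in $V_i$ is always legal for that vertex; hence from the fixing move on both players can always reuse an old colour, the game never gets stuck, and it terminates with every vertex coloured, using no more than $2k-2$ colours. (There is also no colour shortage strictly before the fixing move, since at that stage strictly fewer than $2k-2$ colours have been used, so a fresh colour for a new part is always available.)

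For the main estimate I would classify the moves up to and including the fixing move $t$. Call a move a \emph{start} if it colours the first vertex of some $V_i$; no colour can be reused for such a vertex, so a start always introduces a new colour, and conversely every new colour is introduced either by a start or by a \emph{wasteful} move, that is, a new colour played on an already started part. At the fixing move exactly $k$ starts have occurred, so the number of colours used there is $k+w$, where $w$ is the number of wasteful moves among the first $t$ moves. Under $(A2)$ Alice never plays a new colour on an already started part (rules~1 and~3 are starts, rules~2 and~4 only reuse), so every wasteful move is Bob's; it therefore remains to prove $w\le k-2$.

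The heart of the argument is the way $(A2)$ handles the triple $V_j$. Alice's first move starts $V_j$ (rule~1), leaving exactly two uncoloured vertices there; the first time Bob ever plays in $V_j$ his move leaves exactly one, and Alice's very next move (rule~2) colours it, so $V_j$ becomes fully coloured and Bob can never touch it again. Thus Bob plays in $V_j$ at most once before the fixing move, and Alice makes at most one rule~2 move; write $a_2\in\{0,1\}$ for the number of her rule~2 moves among the first $t$. Moreover, immediately after move~1 the set $V_j$ is the only started part, so Bob's move~2 is forced to be either a start in a fresh part or that single play in $V_j$. A short case check then shows that Bob must make at least one start among the first $t$ moves, and in fact at least two whenever $a_2=1$ and Bob makes the fixing move: after Alice's rule~2 response closes $V_j$, again no started part other than the full $V_j$ is available, which forces another Bob start before the game ends (here $k\ge3$ is used to guarantee the fixing move has not yet occurred).

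Finally I would convert this into the bound on $w$ by move-counting. Let $\alpha,\beta$ be the numbers of Alice's and Bob's moves among the first $t$, so $t=\alpha+\beta$ and $\beta\in\{\alpha,\alpha-1\}$. Alice makes one rule~1 start, $a_2$ rule~2 reuses, and $\alpha-1-a_2$ rule~3 starts, hence $\alpha-a_2$ starts in total, so Bob makes $k-(\alpha-a_2)$ starts; substituting into $w=\beta-(\text{Bob's starts})-(\text{Bob's reuses})\le\beta-(k-\alpha+a_2)$ yields $k+w\le t-a_2$. Since Bob makes at least one start, $\alpha-a_2\le k-1$, i.e.\ $\alpha\le k-1+a_2$; together with the refinement for $a_2=1$ (either Bob makes two starts, so $\alpha\le k-1$, or Bob makes the fixing move, so $\beta=\alpha-1$) one checks in all cases that $t\le 2k-2+a_2$, whence $k+w\le t-a_2\le 2k-2$. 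The only genuinely delicate point is this last bookkeeping, in particular the claim that Bob is forced into enough starts, which is precisely where the size-$3$ part and the precedence of rule~2 over rule~3 in $(A2)$ enter; the rest is routine.
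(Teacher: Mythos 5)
Your proof is correct and follows essentially the same route as the paper's: both reduce the lemma to counting the distinct colours used up to the fixing move, and both hinge on the key observation that Bob is forced to start at least one part before that move (because Alice's rule-2 reply closes the triple $V_j$, so Bob's first or second move must land in an uncoloured part). Your bookkeeping via starts and wasteful moves is more elaborate than the paper's direct count (Alice introduces at most $k-2$ new colours, Bob at most $k-1$, plus one for the fixing move itself), and there is a slip in your final dichotomy: the second branch should read ``Alice makes the fixing move, so $\beta=\alpha-1$'' (if Bob makes it, then $\beta=\alpha$, and that is precisely the case where your claim that Bob makes two starts is needed) --- with that correction your case check goes through.
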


\begin{proof}
  The crucial observation is that before the fixing move, Bob had to start coloring at least one of the sets $V_i$:
  \begin{itemize}
      \item either Bob in his first move played also in $V_j$, so Alice colored the last vertex in $V_j$ in her second move -- and therefore Bob is his second move was forced to pick a vertex in an uncolored set $V_i$,
      \item or Bob in his first move played in some $V_i$ ($i \neq j$) -- but then again he was the first player to pick a vertex from some previously uncolored set.
  \end{itemize}
  Regardless of Bob's choice, it means that before the fixing move Alice started coloring at most $k - 2$ of the sets $V_i$ as one was started by Bob and another has to be yet uncolored.
  Moreover, by the definition of the strategy, $V_j$ was the only set in which she played more than once before the fixing move -- so the total number of her moves before the fixing move is at most $k - 1$. Clearly, Bob too could not make more than $k - 1$ moves before the fixing move occurred.
  Therefore the number of colors used before the fixing move cannot exceed the sum of $k - 2$ (upper bound on the number of new colors introduced by Alice before the fixing move) and $k - 1$ (upper bound on the number of new colors introduced by Bob before the fixing move) -- that is, it is at most equal to $2 k - 3$.
  Therefore any set of at least $2 k - 2$ is sufficient to make the fixing move and to complete the whole coloring according to the rules, as the players can always reuse colors that already appear in the same $V_i$.
\end{proof}

\begin{corollary}
  \label{col:a2}
  For any graph $K_{r_1, \ldots, r_k}$ with $k \ge 3$ and $r_j = 3$ for some $j = 1, 2, \ldots k$ it holds that $\chi_g(K_{r_1, \ldots, r_k}) \le 2 k - 2$.
\end{corollary}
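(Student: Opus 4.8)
The plan is to obtain \Cref{col:a2} as an immediate consequence of \Cref{lem:a2}. The only point that needs checking beyond the lemma is that $(A2)$ is a genuine strategy for Alice, i.e.\ that whenever it is Alice's turn and an uncolored vertex remains, the move prescribed by $(A2)$ is legal. Granting this, \Cref{lem:a2} tells us that with any palette of at least $2k-2$ colors and Alice following $(A2)$, the whole graph becomes colored no matter how Bob plays — which is exactly the assertion that Alice wins the $(2k-2)$-coloring game, hence $\chi_g(K_{r_1,\ldots,r_k}) \le 2k-2$.

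To see that $(A2)$ never prescribes an illegal move, I would run through its four cases. In case~1 Alice assigns a brand new color, legal for any vertex. In case~2 she takes an uncolored vertex of $V_j$ and repeats the color Bob just placed on another vertex of $V_j$; since $V_j$ is independent no neighbour of the chosen vertex carries that color, so the move is legal, and such an uncolored vertex exists because $|V_j| = 3 > 1$ so Bob's single move cannot have filled $V_j$. In case~3 she again uses a new color. In case~4 she picks an uncolored vertex of a partially colored $V_i$ and reuses a color already present in $V_i$, which is legal by independence of $V_i$. Thus Alice can always follow $(A2)$, and since the hypotheses $k \ge 3$, $r_j = 3$ of the corollary match those of \Cref{lem:a2}, the lemma applies.

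I do not expect a real obstacle here: the substantive work is already contained in \Cref{lem:a2}, whose proof hinges on the counting of new colors introduced before the fixing move together with the observation that Bob is forced to be the first player to touch some part. That counting argument is the genuinely hard part, and it is available to us by hypothesis; the corollary itself then follows purely by unwinding the definition of the game chromatic number, so no further computation is required.
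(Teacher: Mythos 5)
Your proposal is correct and matches the paper, which likewise treats \Cref{col:a2} as an immediate consequence of \Cref{lem:a2}: the lemma asserts that with $(A2)$ and any palette of at least $2k-2$ colors the whole graph gets colored regardless of Bob's play, which is precisely the definition of $\chi_g \le 2k-2$. Your additional check that $(A2)$ always prescribes a legal move is a reasonable (and correct) bit of due diligence that the paper leaves implicit.
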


Now we introduce another strategy for complete multipartite graphs with an odd number of vertices. It is particularly well-suited for graphs with many $r_i = 2$:
\begin{definition}
  Let $(A3)$ be the following strategy for Alice for $K_{r_1, \ldots, r_k}$ with odd number of vertices $n$:
  \begin{enumerate}
      \item in the first move pick an uncolored vertex from $V_i$ respective to the smallest odd $r_i$ and assigns to it a new color,
      \item otherwise, if Bob played in his last move at a vertex in some partially colored set $V_i$, then pick another uncolored vertex from $V_i$ and repeat a color just used by Bob,
      \item otherwise, if there is a partially colored set $V_i$, then pick any uncolored vertex from $V_i$ and assign to it a color that was already used for some vertex in $V_i$,
      \item otherwise, pick any uncolored vertex from an uncolored set $V_i$ with the smallest odd $r_i$ and assign a new color to it.
  \end{enumerate}
\end{definition}

Note that the strategy does not specify what to do if there are no partially colored sets and no odd uncolored sets. This is a deliberate decision on our part since we can prove that such a situation cannot occur:
\begin{lemma}
  Let $K_{r_1, \ldots, r_k}$ be a graph with an odd number of vertices $n$. Then $(A3)$ is a correct strategy for Alice, i.e. it does specify a valid move in every possible state of the game. In particular, before her move there is always at least one partially colored set or one odd uncolored set.
\end{lemma}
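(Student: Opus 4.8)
The plan is to track, move by move, an invariant that couples the parity of the number of uncolored sets with the parity of the total number of uncolored vertices, and to show that strategy $(A3)$ preserves it in such a way that Alice is never stuck. The key quantity is the number $p$ of currently uncolored sets with $r_i$ odd: I claim that before each of Alice's moves, if there is no partially colored set at all, then $p \ge 1$.

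First I would set up the parity bookkeeping. Since $n$ is odd, after every move by one of the players the number of still-uncolored vertices has a definite parity, and in particular after an even number of moves in total it is odd, after an odd number of moves it is even. Alice moves on turns $1, 3, 5, \ldots$, so before each of Alice's moves the number of uncolored vertices is odd. Now observe that if there are no partially colored sets, every uncolored set contributes its full size $r_i$ to this count, and every fully colored set contributes $0$; hence the number of uncolored vertices equals $\sum_{i \text{ uncolored}} r_i$, which is odd. An odd sum of integers must contain an odd number of odd summands, so in particular $p \ge 1$ — there is at least one uncolored set with $r_i$ odd. This is exactly what items (1) and (4) of $(A3)$ need, so in the ``no partially colored set'' case the strategy specifies a legal move (pick such an odd uncolored set; a new color is always legal for a vertex whose set has no colored vertex). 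In the complementary case there is a partially colored set, and then items (2) and (3) clearly apply: in case (2) the color Bob just used is legal for the chosen vertex in the same $V_i$ (it already appears in $V_i$, so no neighbour outside $V_i$ carries it, and $V_i$ is independent), and in case (3) the same reasoning applies to any color already present in the chosen partially colored set. So it remains only to justify the parity claim above rigorously, i.e. that the ``no partially colored set'' situation really does force the uncolored-vertex count to equal $\sum_{i \text{ uncolored}} r_i$ and that this count is odd before each of Alice's moves.

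For the second point I would argue by induction on Alice's moves that the total number of colored vertices just before Alice's $t$-th move is even: before her first move it is $0$; and between consecutive moves of Alice exactly two vertices get colored (one by Alice, one by Bob), unless the game has already ended — but if the game ended then there is nothing to prove since Alice has no move to make. Combined with $n$ odd this gives the desired oddness of the uncolored count. The one subtlety is that Bob might be unable to move (no legal color for any uncolored vertex) on some turn; but in that case the game is over and Alice again has no move, so the statement is vacuous for that turn. Hence the induction goes through.

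The main obstacle, and the only place where real care is needed, is the interaction between the parity argument and the possibility that the game terminates early: I must make sure that ``before Alice's move'' always refers to a genuine game state — one in which the graph is not yet fully colored and not yet stuck — and that in every such state the colored-vertex count is even. Once that is pinned down, the odd-sum-has-an-odd-number-of-odd-terms observation does all the work, and every one of the four cases of $(A3)$ is seen to prescribe a legal move. I would also remark that strategy $(A3)$ never creates a state with no partially colored set except possibly right after one of Alice's own moves of type (1) or (4) followed by one of Bob's — but this remark is not needed for correctness, only for intuition, so I would keep it brief.
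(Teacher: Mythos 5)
Your proposal is correct and uses essentially the same argument as the paper: the paper also observes that if Alice faced only fully colored and even uncolored sets, then (since $n$ is odd) an odd number of vertices would be colored, hence an odd number of moves played, so it would be Bob's turn --- a contradiction. You simply phrase this parity argument in contrapositive form and spell out the routine legality details, which the paper leaves implicit.
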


\begin{proof}
  Suppose that by using $(A3)$ Alice reached a state of the game when she faces only fully colored sets and even uncolored sets. Then, it means that the total number of moves played is odd, as $n$ is odd. But if this is the case, then it has to be Bob's move, not Alice's -- a contradiction.
\end{proof}

Now we are ready to proceed with an assessment of the quality of this strategy:
\begin{lemma}
  \label{lem:a3}
  Let $K_{r_1, \ldots, r_k}$ be a graph with an odd $n$ and $k \ge 3$. If Alice uses $(A3)$, then she and Bob can color the whole graph $K_{r_1, \ldots, r_k}$ using any set of at least $\sum_i \left\lceil\frac{r_i}{2}\right\rceil$ colors.
\end{lemma}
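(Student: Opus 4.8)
The plan is to bound the number of colors used \emph{before the fixing move}, since by the intuitive argument from the notation section this immediately bounds $\chi_g$. The key invariant I would try to maintain is that, as long as Alice plays according to $(A3)$ and the fixing move has not yet happened, the set of colored vertices looks as follows: each partially colored set $V_i$ with $r_i$ odd contributes at least one color that is reused (an "Alice pair" paralleling a Bob move), and more generally the number of distinct colors used so far is at most $\sum_{i\in S}\lceil r_i/2\rceil$ where $S$ is the collection of sets touched so far. The heart of the matter is clauses (2) and (3): whenever Bob introduces a new color in a partially colored set (or in a fresh set that then becomes partially colored), Alice responds inside a partially colored set by \emph{reusing} a color, and she only introduces a new color when forced into a fresh odd set by clause (4).

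Concretely, I would argue as follows. Consider the state just before the fixing move. Let $a$ be the number of new colors Alice has introduced and $b$ the number Bob has introduced, so the total number of colors used is $a+b$ (colors appear in order $1,2,\dots$ and each is introduced once). By clause (1) and clause (4), every new color Alice introduces is placed in a previously uncolored set with odd $r_i$, and by clause (1)/(4) she picks the \emph{smallest} such $r_i$; moreover by clauses (2)–(3) she never introduces a new color while any partially colored set exists. So the sets Alice "opens" are distinct odd sets; call them $V_{i_1},\dots,V_{i_a}$. For Bob, each of his $b$ new colors is associated with a move; I want to charge each such move to a distinct set and a distinct "slot" $\lceil r_i/2\rceil$. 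The decisive point: after Bob colors a vertex $v$ in a partially colored set $V_i$ with a new color, Alice's very next move (clause (2)) colors another vertex of $V_i$ reusing that same color — so Bob's new-color moves in a partially colored set cost two vertices of $V_i$ but only one color. After Bob opens a fresh set, Alice is not forced to respond there, but then that set is "charged" to Bob. Counting vertices: every odd set $V_i$ that is touched before the fixing move has at most $\lceil r_i/2\rceil$ colors on it, because Alice's pairing (clause 2) and reuse (clause 3) ensure that at most every other vertex carries a new color; every even set touched before the fixing move is opened by exactly one move (it is touched, not completed, so it contributes at most $r_i/2=\lceil r_i/2\rceil$ colors by the same pairing argument). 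Summing over all touched sets gives at most $\sum_i\lceil r_i/2\rceil$ colors before the fixing move.

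The cleanest way to make this rigorous is probably an explicit potential/invariant maintained inductively over pairs of moves (Bob's move followed by Alice's response): I would show that after each of Alice's moves, letting $T$ be the set of indices $i$ with $V_i$ partially or fully colored, the number of distinct colors used is at most $\sum_{i\in T}\lceil r_i/2\rceil$, \emph{and} that in every partially colored set the number of used colors is at most $\lceil |\text{colored part}|/2\rceil$ rounded appropriately — so that when Bob next moves, either he reuses a color (no change), or he opens a new set $V_j$ (adding $1\le\lceil r_j/2\rceil$ to the bound and adding $j$ to $T$), or he plays a new color in a partially colored $V_i$ (adding $1$, but Alice's clause-(2) response then pairs it so the per-set bound is preserved). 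Once the fixing move occurs, $T$ is everything, the bound reads $\sum_i\lceil r_i/2\rceil$, and Alice and Bob can finish by reuse as in \Cref{lem:a1}. The one subtlety to handle with care — and what I expect to be the main obstacle — is the interaction when Bob opens a fresh set $V_j$ and Alice, having no partially colored set to respond in via clause (2), is pushed to clause (3) or (4): I must check that in that branch the newly opened $V_j$ is still only charged $\lceil r_j/2\rceil$ even though Alice did not immediately pair Bob's vertex in it, i.e. that Alice's subsequent reuses (clause 3) eventually cover $V_j$ without ever exceeding its quota. Verifying this bookkeeping — especially that clause (4)'s "smallest odd $r_i$" choice never strands an odd set in a bad parity state forcing extra new colors later — is the delicate part; the rest is routine pairing.
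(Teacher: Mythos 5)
Your proposal follows essentially the same route as the paper's proof: the paper likewise observes that $(A3)$ makes Alice open only odd sets and otherwise reuse colors, so that in every set other than the one receiving the fixing move the pairing of Alice's clause-(2) responses with Bob's moves forces at most $\left\lceil\frac{r_i}{2}\right\rceil$ distinct colors, and then sums over the sets. The paper executes this as a direct three-case per-set count (even set; odd set started by Bob; odd set started by Alice) rather than your inductive invariant, and the ``subtlety'' you flag is not actually an obstacle, since a freshly opened set with $r_j\ge 2$ is itself partially colored, so clause (2) already compels Alice to respond inside it.
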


\begin{proof}
  Observe that using $(A3)$ ensures that Alice never starts coloring a set with an even number of vertices. 
  Now, let $V_j$ be the set in which the fixing move is played.
  For any $i \neq j$ we know that there are used at most $\left\lceil\frac{r_i}{2}\right\rceil$ colors:
  \begin{itemize}
      \item if $r_i$ is even, then Bob always plays the first move and there are at most $\frac{r_i}{2}$ moves by Bob in $V_i$ -- therefore, the total number of colors used in $V_i$ does not exceed $\frac{r_i}{2}$,
      \item if $r_i$ is odd and Bob colored a first vertex in $V_i$, then Bob played there at most $\frac{r_i + 1}{2}$ moves -- so he used at most $\frac{r_i + 1}{2}$ different colors (and Alice always repeated already used ones),
      \item if $r_i$ is odd and Alice colored a first vertex in $V_i$, then Bob played there at most $\frac{r_i - 1}{2}$ moves -- so they used at most $\frac{r_i + 1}{2}$ different colors.
  \end{itemize}
  In total, before the fixing move we need at most $\sum_{i \neq j} \left\lceil\frac{r_i}{2}\right\rceil$ colors for all vertices in all sets other than $V_j$.
  And if there are available at least $\sum_{i \neq j} \left\lceil\frac{r_i}{2}\right\rceil + 1$ colors, then Alice or Bob can always play the fixing move and finish the whole coloring. However, since $\left\lceil\frac{r_i}{2}\right\rceil \ge 1$, we know that the total number of colors can be always bounded also by $\sum_i \left\lceil\frac{r_i}{2}\right\rceil$.
\end{proof}

\begin{corollary}
  \label{col:a3}
  For any graph $K_{r_1, \ldots, r_k}$ with odd $n$ it holds that $\chi_g(K_{r_1, \ldots, r_k}) \le \sum_i \left\lceil\frac{r_i}{2}\right\rceil$.
\end{corollary}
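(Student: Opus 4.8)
The plan is to read the bound off from \Cref{lem:a3} with essentially no extra work. By definition $\chi_g(G)$ is the least number of colours for which Alice has a strategy forcing the whole graph to be coloured, and a strategy that wins with $m$ colours also wins with any larger palette. \Cref{lem:a3} hands us exactly such a strategy, namely $(A3)$, which succeeds whenever $n$ is odd, $k\ge 3$, and at least $\sum_i\lceil r_i/2\rceil$ colours are available; so I would simply conclude $\chi_g(K_{r_1,\dots,r_k})\le\sum_i\lceil r_i/2\rceil$ for all such graphs. This is the substantive range of the corollary, and the only one in which it is invoked in \Cref{tab:results}.

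If one wants the statement for every $k$, I would then dispatch the two degenerate cases $k\le 2$ by hand. For $k=1$ the graph $K_{r_1}$ is edgeless, so $\chi_g=1\le\lceil r_1/2\rceil$. For $k=2$, \Cref{col:a1} already gives $\chi_g\le 2k-1=3$; since $n$ is odd the case $r_1=r_2=2$ cannot occur, so when $r_2\ge 2$ we have $r_1\ge 3$ and hence $\sum_i\lceil r_i/2\rceil\ge\lceil r_1/2\rceil+1\ge 3\ge\chi_g$, while when $r_2=1$ oddness forces $r_1$ even (so $r_1\ge 2$), $\chi_g(K_{r_1,1})=2$, and $\sum_i\lceil r_i/2\rceil=\frac{r_1}{2}+1\ge 2$.

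I expect no real obstacle here. Every bit of combinatorial content — that $(A3)$ never starts an even set, the per-set count of at most $\lceil r_i/2\rceil$ colours, and even the fact that $(A3)$ always specifies a legal move — has already been proved in and just before \Cref{lem:a3}. The corollary is a one-line translation of that lemma into the language of $\chi_g$, together with the routine bookkeeping above for $k\le 2$; if anything needs care it is only making sure the small-$k$ sub-cases are stated consistently with \Cref{tab:results}.
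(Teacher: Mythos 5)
Your proposal is correct and matches the paper's (implicit) argument: the corollary is read off directly from \Cref{lem:a3}, which supplies the winning strategy $(A3)$ for $k\ge 3$. Your additional verification of the degenerate cases $k\le 2$ is sound and actually patches a small mismatch the paper leaves unaddressed, since \Cref{lem:a3} assumes $k\ge 3$ while the corollary does not.
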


\section{Strategy for Bob}

Surprisingly, there is only one main strategy for Bob:
\begin{definition}
  Let $(B1)$ be the following strategy for Bob for $K_{r_1, \ldots, r_k}$:
  \begin{enumerate}
      \item if Alice picked a vertex in $V_i$ and $V_i$ is partially colored, pick any uncolored vertex from $V_i$,
      \item otherwise, if Alice picked a vertex in $V_i$, it is now fully colored and there is any partially colored set, then choose any partially colored $V_j$ with the smallest number of uncolored vertices and pick any uncolored vertex from $V_j$,
      \item otherwise, if Alice picked a vertex in $V_i$, it is now fully colored, and there is are uncolored set, then choose any vertex from the largest uncolored $V_j$.
  \end{enumerate}
  Assign to a chosen vertex a new color if you can. Otherwise, reuse a color that already appeared for some other vertex in the respective set of the chosen vertex.
\end{definition}

Now we proceed with the counterparts of \Cref{col:a1,col:a2,col:a3}, establishing the optimality of the respective strategies for Alice and Bob.
\begin{lemma}
  \label{lem:b1-main}
  If $K_{r_1, \ldots, r_k}$ has $r_k \ge 4$ and Bob uses $(B1)$, then he and Alice cannot color the whole graph $K_{r_1, \ldots, r_k}$ using any set with less than $2 k - 1$ colors.
\end{lemma}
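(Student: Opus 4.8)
The plan is to show that if the palette $C$ has $|C|\le 2k-2$ colours and Bob follows $(B1)$, then the game can never be completed. Write $t$ for the number of \emph{touched} (partially or fully coloured) parts and $u$ for the number of colours used so far. Two elementary facts drive everything: (A) in a complete multipartite graph every colour already in use is forbidden in every part other than the one containing it, so the player who first colours a vertex of a fresh part is \emph{forced} to spend a brand-new colour; and (B) by $(B1)$ Bob spends a new colour on every move on which an unused colour still exists (an unused colour is always legal). In particular, if at some moment all $|C|$ colours are in use while some part $V_i$ is still untouched, then all $|C|$ colours sit in $\bigcup_{j\ne i}V_j=N(V_i)$, so $V_i$ can never be touched and Bob wins. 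Hence it suffices to prove that by the time the fixing move would be played, all $2k-2$ colours are already in use.

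I would prove this through the potential $\Phi=u-2t$ and the invariant: \emph{after every move of Bob, either Bob has already secured the win, or $t<k$ and $\Phi\ge 0$} (i.e.\ $u\ge 2t$); the base case (the empty board, $t=u=0$) is trivial. Granting the invariant, consider the first Bob move after which $t=k-1$ — such a move exists, because whenever Alice touches a fresh part Bob's immediate reply stays inside that part. At that point $u\ge 2(k-1)=2k-2=|C|$, so all colours are used while the last part is untouched, and Bob wins. The same bound shows the fixing move can never actually be played: if Alice were to play it, the preceding Bob move would already have left $t=k-1$, hence all colours used and no legal colour for the last part; and Bob never plays it either, by the count below. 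Since the game must terminate and the last part is never touched, Bob wins.

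The inductive step is a one-round case analysis (Alice's move, then Bob's reply), using (A) and (B). If Alice plays inside a partial part without finishing it, Bob answers inside it (rule 1) with a new colour, so $\Phi$ does not drop. If Alice touches a fresh part $V_i$, then (as $r_i\ge 2$) $V_i$ is left partial, which costs $\Phi$ one unit, but Bob's rule-1 reply puts a second new colour into $V_i$ and $\Phi$ is restored. If Alice finishes a part while a partial part still exists, Bob answers there (rule 2) with a new colour and $\Phi$ goes up. The one remaining, and genuinely delicate, case is when Alice finishes a part and \emph{no} partial part remains, forcing Bob into rule 3, i.e.\ forcing him to touch a fresh part himself; this again costs $\Phi$ a unit and is not immediately compensated.

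This is the main obstacle, and it is precisely where the hypothesis $r_k\ge 4$ — equivalently $r_i\ge 4$ for every $i$ — is used. Rule 3 fires only when every one of the $t$ currently touched parts is \emph{fully} coloured; since each has at least $4$ vertices, at least $4t$ moves have been made, so Bob alone has made at least $2t$ of them, each introducing a distinct new colour, and Alice's opening move contributes one more — hence in fact $u\ge 2t+1$ just before the rule-3 move, so after it $u\ge 2(t+1)$ and the invariant is re-established with a unit to spare. The same count shows that if this rule-3 touch were the fixing move we would need $u\ge 2k-1>|C|$, which is impossible — so Bob never plays the fixing move, as claimed. Combining the cases yields the invariant, and with it the lemma.
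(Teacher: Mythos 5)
Your proof is correct, and it reaches the paper's key intermediate fact --- that at least $2k-2$ colours are already in use before any fixing move could be attempted --- by a genuinely different route. The paper argues locally and structurally: after any move of Bob at most one touched part is a ``B-singleton'' (exactly one vertex coloured, by Bob alone); the existence of such a part forces the existence of an Alice-started, fully coloured part in which, because its size is at least $4$, Bob has played at least twice and hence at least $3$ colours appear; every other touched part carries at least $2$ colours; summing gives $1+3+2(k-3)=2k-2$, with a separate short count for the case where Bob himself would have to make the fixing move. You instead run a dynamic, amortized argument: the invariant $u\ge 2t$ after each of Bob's moves, where the only case that threatens it --- Bob forced by rule 3 to open a fresh part --- is absorbed by observing that all $t$ touched parts are then fully coloured, so at least $4t$ vertices (hence at least $2t$ Bob moves, each spending a new colour, plus Alice's opening colour) have been played, giving $u\ge 2t+1$ with a unit to spare. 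This avoids the B-singleton bookkeeping entirely and treats Alice's and Bob's potential fixing moves by one uniform count; the paper's version instead pins down \emph{which} set carries the third colour, a more local piece of information that is echoed in the later lemmas for smaller $r_k$. Both proofs use the hypothesis $r_k\ge 4$ in the same essential role: the completed parts are large enough to have soaked up enough of Bob's new colours to pay for the parts he is forced to start. The only cosmetic slip is that your ``base case'' (the empty board) is not a state occurring after a move of Bob; the genuine base case is the state after Bob's first move, where $t=1$ and $u=2$, which your rule-1 analysis already covers.
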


\begin{proof}
  Note that using this strategy we can ensure that at any time there can exist only at most one set $V_i$ that has two properties: $(a)$ it has exactly one vertex colored, and $(b)$ it was colored only by Bob. And if such $V_i$ does exist, then by the definition of the strategy there has to be also somewhere in the graph a fully colored set $V_j$ with its first vertex colored by Alice. Because if it were not the case, then Bob would never deliberately start coloring any set.
  Note that such $V_j$ would have been not only started by Alice, but also the fact that $r_j \ge r_k \ge 4$ together with Bob's strategy implies that Bob played at least $2$ moves there -- so in total in $V_j$ the players would use at least $3$ colors.
  
  Moreover, after every move by Bob until the fixing move all other partially or fully colored sets $V_l$, $l \notin \{i, j\}$, are:
  \begin{itemize}
      \item either started by Alice with an immediate response by Bob,
      \item or started by Bob with later moves by both Alice and Bob,
      \item or started by Bob with a second move also made by Bob.
  \end{itemize}
  In all cases, it is obvious that in these sets players used at least $2$ distinct colors.
  
  Let us now think about a situation just before the fixing move.
  If Alice makes the fixing move, then there are two possible situations:
  \begin{itemize}
      \item either there exists some $V_i$ such that it has only one vertex colored and it was colored only by Bob and therefore players used one color for this set, but also at least $3$ colors for the respective $V_j$ by the argument in the first paragraph, and at least $2$ colors for all sets $V_l$ ($l \notin \{i, j\}$), by the argument above.
      \item or, there is no $V_i$ with this property -- and therefore every set is either started by Alice (with an immediate response by Bob), or started by Bob (with at least one more move by Bob since $r_j \ge 4$), in each case in each set there are used at least $2$ colors.
  \end{itemize}
  Either way, before the fixing move there were at least $2 k - 2$ colors used in total.
  
  If this is Bob's move, then all sets but the last one were already fully colored before the fixing move. But since $r_i \ge 4$, by his strategy Bob played in all sets but one at least twice, so he himself had to use at least $2 k - 2$ colors.

  Overall, Bob may ensure by using $(B1)$ that at least $2 k - 2$ colors were used before the fixing move. Therefore if the color set contains less than $2 k - 1$ colors, then there always exists a way to enforce a situation in which we cannot play the fixing move, i.e. legally extend the coloring to the last set, and therefore to the whole graph.
\end{proof}

\begin{corollary}
  \label{col:b1-main}
  For $K_{r_1, \ldots, r_k}$ with $r_k \ge 4$ it holds that $\chi_g(K_{r_1, \ldots, r_k}) \ge 2 k - 1$ colors.
\end{corollary}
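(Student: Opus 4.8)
The plan is to read off the corollary from \Cref{lem:b1-main} using only the definition of the game chromatic number together with the general observation, recorded at the end of the Notation section, that a Bob strategy which keeps the palette exhausted before the fixing move certifies a lower bound. Recall that $\chi_g(G)$ is the least number of colours for which Alice has a winning strategy, so to prove $\chi_g(K_{r_1, \ldots, r_k}) \ge 2k-1$ it is enough to exhibit, for a palette of $2k-2$ colours, a Bob strategy under which Alice cannot finish the colouring no matter how she plays.

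So I would fix a set of exactly $2k-2$ colours and let Bob follow $(B1)$. By \Cref{lem:b1-main}, under $(B1)$ the players cannot complete a proper colouring of $K_{r_1, \ldots, r_k}$ with fewer than $2k-1$ colours; in fact the proof of that lemma gives more, namely that Bob can force all $2k-2$ available colours to be used before the fixing move, i.e.\ while some part $V_i$ is still entirely uncoloured. Since $V_i$ is adjacent to every vertex outside it and all $2k-2$ colours already occur outside $V_i$, no colour is legal for any vertex of $V_i$, so the game halts at a position that is not a complete colouring. This is a loss for Alice, and since it holds against every strategy of Alice, she has no winning strategy on $2k-2$ colours; hence $\chi_g(K_{r_1, \ldots, r_k}) \ge 2k-1$.

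There is essentially no new obstacle here, as the combinatorial content lies entirely inside \Cref{lem:b1-main}. The one point I would double-check is the passage from ``the whole graph cannot be coloured'' to ``the game ends with an uncoloured part'', i.e.\ that the $(2k-2)$-th colour is genuinely introduced at a non-terminal position rather than at one that already happens to be a full colouring; this is precisely what the phrase ``before the fixing move'' secures in the lemma's proof, so the corollary follows at once.
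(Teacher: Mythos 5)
Your proposal is correct and matches the paper, which states \Cref{col:b1-main} as an immediate consequence of \Cref{lem:b1-main} with no separate proof; the deduction you spell out (Bob forces all $2k-2$ colours to appear while some part is still uncoloured, so no legal colour remains for that part) is exactly the general argument the paper records at the end of its Notation section.
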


\begin{lemma}
  \label{lem:b1-2}
  Let $K_{r_1, \ldots, r_k}$ be such that $k \ge 3$ and $r_j \neq 3$, $r_j \ge 2$ for all $j = 1, 2, \ldots, k$. If Bob uses the strategy $(B1)$, then the players cannot color the whole graph $K_{r_1, \ldots, r_k}$ using any set with at most $\min\{2 k - 2, \sum_i \left\lceil\frac{r_i}{2}\right\rceil - 1\}$ colors.
  
  Moreover, if $n$ is even, Bob can ensure that he and Alice cannot color the whole graph $K_{r_1, \ldots, r_k}$ using any set with at most $2 k - 2$ colors.
\end{lemma}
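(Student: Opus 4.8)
The plan is to lean on the ``intuitive argument'' recorded just after the definition of the fixing move: it suffices to show that, with Bob playing $(B1)$, by the time of the fixing move at least $\min\{2k-2,\sum_i\lceil r_i/2\rceil-1\}$ distinct colours have already been used — and at least $2k-2$ when $n$ is even — or else Bob can make the game unfinishable even earlier. Indeed, just before the fixing move exactly one part $V_m$ is still empty, so every colour occurring anywhere is forbidden for every vertex of $V_m$; hence if the palette has only $l$ colours and all $l$ already occur, no vertex of $V_m$ has a legal colour and Bob wins. So suppose for contradiction that with a palette of size $l\le\min\{2k-2,\sum_i\lceil r_i/2\rceil-1\}$ (respectively $l\le 2k-2$, $n$ even) the graph is completely coloured; then strictly fewer than $l$ colours appear before the fixing move, and we seek a contradiction.

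First I isolate the features of $(B1)$ that drive the count. (i) Bob introduces a brand-new colour at every move where the palette still has an unused one; and since, by rule~1, a move of Alice inside a part that stays partially coloured is answered at once inside the same part, inside each part $V_i$ the players' moves interleave so tightly that the number $c_i$ of colours present in $V_i$ is at least roughly half the number of its coloured vertices — a bit more when Alice opened $V_i$, because her opening colour there is itself new. Concretely, any part opened by Alice carries $\ge 2$ colours; any \emph{fully} coloured part $V_i\ne V_m$ carries $\ge\lceil r_i/2\rceil$ colours; and — because $r_i\ne3$ forces $r_i\ge4$ — any fully coloured part Alice opened carries $\ge3$. (ii) By rule~3 Bob opens a part only right after Alice finishes a part while nothing partial remains; an induction on the order in which parts get finished, using $r_i\ne3$ once more, shows that the very first part Alice ever finishes has odd size $\ge5$ and hence carries $\ge3$ colours — this is the ``surplus'' that pays for thin parts. (iii) After every move of Bob before the fixing move, all coloured parts hold $\ge2$ coloured vertices with a single possible exception, a Bob-opened part holding just one vertex — the invariant already used in the proof of \Cref{lem:b1-main}.

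The argument now turns on whether Bob is ever forced by rule~3 to open a part of size $2$. Since Bob always opens a largest empty part and $V_m$ stays empty throughout, this \emph{bad event} can occur only when every remaining empty part, $V_m$ included, has size $2$; so it forces $r_m=2$, and at the moment it first occurs every part of size $\ge4$ is already fully coloured and stays so. Then by fact~(i) every part $V_i\ne V_m$ carries at least $\lceil r_i/2\rceil$ colours (equality for the size-$2$ parts and for the already-finished larger ones), so at least $\sum_{i\ne m}\lceil r_i/2\rceil=\sum_i\lceil r_i/2\rceil-1$ colours are used — matching the value that $(A3)$ shows optimal for odd $n$ — and a parity count (the first size-$2$ part Bob opens does so on an even-indexed move, preceded exactly by all parts of size $\ge4$ plus an even number of vertices, which forces $\sum_{r_i\ge4}r_i$, hence $n$, to be odd) shows the bad event simply cannot happen when $n$ is even. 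If the bad event does not occur, Bob opens only parts of size $\ge4$, so the only part that can carry a single colour is the lone single-vertex part of fact~(iii); then every other part $V_i\ne V_m$ carries $\ge2$ colours (opened by Alice and answered by Bob, or opened by Bob with a further Bob move), and whenever the single-vertex part is present so is the surplus part of fact~(ii), with $\ge3$ colours, offsetting it; in all cases this yields at least $1+3+2(k-3)=2(k-1)=2k-2$ colours. As $2k-2\ge\min\{2k-2,\sum_i\lceil r_i/2\rceil-1\}$, this closes the general bound in the remaining cases and gives the even-$n$ refinement.

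What I expect to be the real work is making the bad-event analysis airtight: showing that a Bob-opened size-$2$ part can only ever contribute a single colour (namely when Alice finishes it by re-using Bob's colour) and never less, pinning down exactly which move indices such forced openings can occur on, and verifying that the parity of $n$ indeed separates the $\sum_i\lceil r_i/2\rceil-1$ regime from the $2k-2$ regime. The remaining steps — the per-part estimates of fact~(i), the induction of fact~(ii), the invariant of fact~(iii), and the final summations — are routine refinements of the bookkeeping already carried out in the proof of \Cref{lem:b1-main}.
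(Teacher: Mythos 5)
Your proposal is correct in substance and follows essentially the same route as the paper: the paper also splits on the nature of the last set Bob is forced to open before the fixing move (its Case (3) is your ``bad event'', its Cases (1)--(2) are your ``no bad event'' branch), uses the same per-part colour counts $\lceil r_i/2\rceil$, the same parity argument to rule the bad event out for even $n$, and the same ``one-colour part offset by a three-colour surplus part'' bookkeeping for the $2k-2$ bound. One intermediate claim of yours is, however, false as stated: it is not true that ``the very first part Alice ever finishes has odd size $\ge 5$''. For instance, Alice can open an odd part of size $5$, have Bob complete it under rule 2 while she opens and then finishes a size-$2$ part, so the first part \emph{Alice} finishes has size $2$. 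What is actually true (and is what the paper proves, and all your final count needs) is that at any moment Bob is forced to invoke rule 3, every colored vertex lies in a fully colored set and their total number is odd, so \emph{some} fully colored set has odd size, hence size $\ge 5$ by $r_i\notin\{1,3\}$, hence carries $\ge 3$ colours. Similarly, your clause ``any fully coloured part Alice opened carries $\ge 3$'' should be restricted to parts of size $\ge 4$ (a size-$2$ part opened by Alice carries exactly $2$). Neither slip affects the final summations, which match the paper's.
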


\begin{proof}
  We split the proof into three subcases $(1)$--$(3)$, depending on the properties of the last set that was first colored by Bob before the fixing move occurred.
  
  \textbf{Case (1):} There is no such set i.e. Alice started coloring all sets until the fixing move. In this case, using $(B1)$ guarantees that before the fixing move Alice used $k - 1$ different colors when she colored the first vertices of first $k - 1$ sets $V_j$ with $r_j \ge 2$ and Bob used $k - 1$ different colors in his subsequent moves.
  Therefore, both players used at least $2 k - 2$ colors before reaching the fixing move, so it clearly holds that this number of colors is not sufficient to make a legal fixing move.
  
  \textbf{Case (2):} Assume that there is such a set $V_i$ with $r_i > 2$. Note that the strategy implies two facts: that Bob never started coloring any set $V_j$ with $r_j = 2$, and when Bob started coloring $V_i$ there could not be any partially colored sets.
  Moreover:
  \begin{itemize}
      \item when Bob started coloring $V_i$, the sum of cardinalities of all fully colored sets had to be odd, so at least one fully colored $V_j$ had the odd number of vertices -- but then $r_j \ge 5$, so there are at least $\lceil\frac{r_j}{2}\rceil \ge 3$ distinct colors used there (one by the player starting it and at least $\lfloor\frac{r_j}{2}\rfloor$ by subsequent moves by Bob due to his strategy),
      \item if Alice started coloring any $V_j$ with $r_j \ge 4$ ($j \neq i$), then Bob responded in the same set in his next move, thus they used at least $2$ different colors there,
      \item if Bob started coloring any $V_j$ with $r_j \ge 4$ ($j \neq i$), then it had to be fully colored by the time he started coloring $V_i$ -- in particular, this implies that he played at least $\lfloor\frac{r_j}{2}\rfloor \ge 2$ moves there, all using different colors,
      \item since Alice started coloring all $V_j$ with $r_j = 2$, by his strategy Bob immediately responded by coloring the other vertex from the same set, so they used $2$ distinct colors there.
  \end{itemize}
  In total, just before the fixing move the players used at least one color for one set (i.e. $V_i$ itself), at least $3$ colors for another set (i.e. $V_j$ from the first case above), and at least $2$ colors for all other sets (excluding the set with the fixing move). By this, we arrive at the result that they used at least $2 k - 2$ colors while there still remained one uncolored set -- which contradicts the possibility of making a legal fixing move.

  \textbf{Case (3):} Suppose we have an appropriate set $V_i$ with $r_i = 2$.
  This means that all sets with $r_j \neq 2$ have to be fully colored before Bob started coloring $V_i$.
  But then we know that by Bob's strategy each set $V_j$ with $r_j > 2$ was:
  \begin{itemize}
      \item either started by Alice and with at least $\lfloor\frac{r_j}{2}\rfloor$ moves by Bob,
      \item or started by Bob and with at least $\lceil\frac{r_j}{2}\rceil$ moves by Bob.
  \end{itemize}
  In both cases the players use at least $\lceil\frac{r_j}{2}\rceil$ distinct colors for the vertices in $V_j$.
  In total, all sets with $r_j > 2$ require at least $\sum_{j: r_j > 2} \lceil\frac{r_j}{2}\rceil$ distinct colors.
  Clearly, each set with $r_j = 2$ requires at least one color -- so to color all but the last one (in which some player makes the fixing move) we need $\sum_{j: r_j = 2} \lceil\frac{r_j}{2}\rceil - 1$ colors.
  Therefore, Bob can ensure that at least
  \begin{align*}
      \sum_{j: r_j > 2} \left\lceil\frac{r_j}{2}\right\rceil + \sum_{j: r_j = 2} \left\lceil\frac{r_j}{2}\right\rceil - 1 = \sum_{j} \left\lceil\frac{r_j}{2}\right\rceil - 1
  \end{align*}
  colors are needed before the fixing move. And if the size of the set of colors does not exceed that number, no player can make a legal fixing move.
  
  Finally, note that the case $(3)$ occurs only for odd $n$, as it implies that all $V_j$ with $r_j > 2$ were fully colored before Bob played his first move in $V_i$. However, if $n$ is even, the situation when only sets $V_j$ with $r_j = 2$ are uncolored and there are no partially colored sets can only occur before Alice's move.

  To conclude the proof for odd $n$ it is sufficient to observe that if in some cases $c_1$ colors are insufficient and in all other cases $c_2$ colors are insufficient to make the legal fixing move, then $\min\{c_1, c_2\}$ colors are always insufficient.
\end{proof}

\begin{corollary}
  \label{col:b1-2}
  For $K_{r_1, \ldots, r_k}$ with $k \ge 3$ and $r_j \neq 3$, $r_j \ge 2$ for all $j = 1, 2, \ldots, k$ it holds that $\chi_g(K_{r_1, \ldots, r_k}) \ge \min\{2 k - 1, \sum_i \left\lceil\frac{r_i}{2}\right\rceil\}$ colors.
  Additionally, if $n$ is even, then $\chi_g(K_{r_1, \ldots, r_k}) \ge 2 k - 1$.
\end{corollary}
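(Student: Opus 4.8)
The statement follows immediately from \Cref{lem:b1-2} combined with the general principle recorded in the Notation and concepts section: if Bob has a strategy guaranteeing that the players exhaust a set of $l$ colors while at least one part $V_i$ is still uncolored --- that is, before the fixing move --- then $\chi_g(K_{r_1,\ldots,r_k}) > l$. Indeed, once $V_i$ is uncolored and all $l$ colors already appear in the graph, they all appear outside $V_i$, which is the whole neighbourhood of each vertex of $V_i$; hence no vertex of $V_i$ has a legal color and the coloring cannot be completed. I would also note in passing that every completed coloring passes through a fixing move, so ``Bob can prevent a legal fixing move with $l$ colors'' is the same as ``Bob wins with $l$ colors''.

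First I would apply \Cref{lem:b1-2} with Bob playing $(B1)$: it states precisely that no color set of size at most $\min\{2k-2,\ \sum_i \left\lceil\frac{r_i}{2}\right\rceil - 1\}$ suffices to finish the game against this strategy, whence $\chi_g(K_{r_1,\ldots,r_k}) \ge \min\{2k-2,\ \sum_i \left\lceil\frac{r_i}{2}\right\rceil - 1\} + 1$. The only computation left is the elementary identity $\min\{a,b\} + 1 = \min\{a+1,b+1\}$, applied with $a = 2k-2$ and $b = \sum_i \left\lceil\frac{r_i}{2}\right\rceil - 1$, which rewrites the right-hand side as $\min\{2k-1,\ \sum_i \left\lceil\frac{r_i}{2}\right\rceil\}$ and gives the first bound. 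For the additional claim, when $n$ is even I would instead invoke the ``moreover'' part of \Cref{lem:b1-2}, which asserts that against $(B1)$ no color set of size at most $2k-2$ is sufficient; this yields $\chi_g(K_{r_1,\ldots,r_k}) \ge 2k-1$ directly.

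There is no real obstacle here: the entire content sits in \Cref{lem:b1-2}, and this corollary is a one-line translation of that lemma together with the ``fixing move'' principle. The only points deserving a word of care are the two remarks already made above --- that blocking the fixing move is equivalent to winning, and the arithmetic of the nested minimum.
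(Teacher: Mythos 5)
Your proposal is correct and matches the paper's (implicit) derivation exactly: the corollary is stated without a separate proof precisely because it follows from \Cref{lem:b1-2} by the fixing-move principle and the identity $\min\{a,b\}+1=\min\{a+1,b+1\}$, just as you describe. Nothing further is needed.
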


As we proved earlier, the existence of a part of size $3$ in the graph helps Alice save one color, provided she uses a special strategy $(A2)$ instead of $(A1)$.
However, it turns out that the optimal strategy for Bob remains the same in this case: 
\begin{lemma}
  \label{lem:b1-3}
  Let $K_{r_1, \ldots, r_k}$ has $k \ge 3$, $r_j = 3$ for some $j \in \{1, 2, \ldots, k\}$. If Bob uses the strategy $(B1)$, then he wins on the graph $K_{r_1, \ldots, r_k}$ for any set with at most $\min\{2 k - 3, \sum_i \left\lceil\frac{r_i}{2}\right\rceil - 1\}$ colors.
  
  Moreover, if $n$ is even, Bob can ensure that he and Alice cannot color the whole graph $K_{r_1, \ldots, r_k}$ using any set with at most $2 k - 3$ colors.
\end{lemma}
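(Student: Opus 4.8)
The plan is to mirror the three-case analysis from the proof of \Cref{lem:b1-2}, organizing everything around the last set that was first colored by Bob before the fixing move. Throughout, let $V_m$ be the unique set still uncolored just before the fixing move, let $c$ be the number of distinct colors that have appeared on vertices by that point, and recall the general principle that if Bob can guarantee $c\ge t$ then a palette of at most $t$ colors admits no legal fixing move. One preliminary remark is used repeatedly: if at some moment before the fixing move all palette colors are already in use then Bob has already won, so we may assume that whenever Bob moves he still has a fresh color available (and by $(B1)$ he then uses it). In Case (1), where no set is first colored by Bob before the fixing move, the argument is verbatim that of \Cref{lem:b1-2}: each of the $k-1$ sets started so far was started by Alice with a new color, and each of Bob's $k-1$ immediate replies introduces another new color, so $c\ge 2k-2$.

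In Case (2) the last set $V_i$ first colored by Bob has $r_i>2$. I would follow the corresponding case of \Cref{lem:b1-2}: when Bob applied the third rule of $(B1)$ to start $V_i$ there were no partially colored sets, so the total size of the fully colored sets was odd and some fully colored set $V_{j'}$ has odd cardinality. The one genuine difference from \Cref{lem:b1-2} is that here $r_{j'}$ may equal $3$, in which case $V_{j'}$ only guarantees $1+\lfloor r_{j'}/2\rfloor=2$ distinct colors instead of $3$. Combining this with the trivial bound of $\ge 1$ color for $V_i$ itself and $\ge 2$ colors for each of the remaining $k-3$ sets (an Alice-started set gets an immediate reply from Bob by the first rule of $(B1)$; a Bob-started set was already fully colored by the time Bob started $V_i$, and the preliminary remark forces a second color in it) yields $c\ge 1+2+2(k-3)=2k-3$.

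In Case (3) the last such set $V_i$ has $r_i=2$, and the analysis is identical to that of \Cref{lem:b1-2}: starting a size-$2$ set by the third rule of $(B1)$ forces every set of size $>2$ --- size-$3$ sets included --- to be already fully colored, hence to carry at least $\lceil r_j/2\rceil$ colors, while every size-$2$ set is Alice-started and carries at least one color; discounting the last set gives $c\ge\sum_j\lceil r_j/2\rceil-1$, and a parity count shows this case arises only for odd $n$. Finally, using the elementary fact that if a palette is insufficient in one family of plays and a (possibly different) palette is insufficient in all the remaining plays then the smaller of the two is always insufficient, Bob wins whenever the palette has at most $\min\{2k-3,\sum_j\lceil r_j/2\rceil-1\}$ colors; and when $n$ is even only Cases (1) and (2) can occur, leaving the bound $2k-3$.

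I expect the real work to be in Case (2): one must check that against an arbitrary Alice strategy Bob's replies genuinely force two distinct colors in every set other than $V_i$, $V_{j'}$ and $V_m$, even when Alice tries to economize by reusing a color on an extra vertex of a size-$2$ or size-$3$ set rather than starting a fresh one. The key point is that the first rule of $(B1)$ makes Bob respond inside any set Alice leaves partially colored, so such an economy move either provokes a compensating new color from Bob or merely leaves an additional set uncolored, and in neither case does $c$ drop below $2k-3$. A secondary, routine check is to confirm within each case that the stated bound also holds in the sub-case where Bob himself plays the fixing move.
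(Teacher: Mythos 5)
Your proof is correct and follows essentially the same route as the paper's: a case analysis organized around the last set Bob starts before the fixing move, per-set color counting, and the parity observation that the size-$2$ case forces $n$ to be odd. The only divergence is in the middle case ($r_i>2$), where the paper introduces a ``B-singleton'' invariant (at most one Bob-started set with a single colored vertex after any of Bob's moves) to reach $1+2(k-2)=2k-3$, while you instead reuse the \Cref{lem:b1-2} observation that every earlier Bob-started set is already fully colored and hence carries two colors --- an equivalent count in which your appeal to the odd fully colored set $V_{j'}$ is in fact superfluous, since the required $2k-3$ already follows from giving $V_{j'}$ the same bound of two colors as the other sets.
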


\begin{proof}
  Similarly as in the previous proof, we distinguish the cases $(1)$--$(2)$ depending on the size of the last set started by Bob before the fixing move, denoted by $V_i$.

  \textbf{Case (1):} If either $V_i$ does not exist or it exists and its respective $r_i > 2$, then we know that before the fixing move Bob started coloring only sets with size greater than $2$.
  Let us call \emph{B-singleton} a set that has three properties: it is partially colored, it has exactly one vertex colored, and it was colored only by Bob.
  First, we note that after any move by Bob, there always exists at most one B-singleton.
  Clearly, after his first move this condition is met. Moreover, if it is true after his $l$-th move, then:
  \begin{itemize}
      \item either Alice starts coloring a new set $V_i$, in which case Bob responds in the same set, so overall no new B-singleton appears,
      \item or Alice plays in some set that was already partially colored, but then Bob starts coloring a new set $V_i$ only in the case that there are no more uncolored vertices in partially colored sets -- so only if before his move there are no B-singletons.
  \end{itemize}
  In both cases, the invariant is also true after Bob's $(l + 1)$-th move.
  
  For any other set $V_j$ other than B-singleton and the set in which the fixing move is made, one of the following conditions hold:
  \begin{itemize}
      \item the first vertex was colored by Alice and the second one was colored right after that by Bob,
      \item the first vertex was colored by Bob, then at some point the second vertex was colored by Alice and the third one was colored right after that by Bob,
      \item the first vertex was colored by Bob and then -- since it is not a B-singleton and it does not fall under the case above -- at some point the second vertex was also colored by Bob.
  \end{itemize}
  Thus, Bob's strategy guarantees that for every such $V_j$ there are at least $2$ distinct colors used. Of course in B-singleton there is exactly one color used.

  Therefore, the total number of colors in use before the fixing move is at least equal to $1 + 2 (k - 2)$. Therefore in this case we cannot play the fixing move and complete the coloring if the set of colors contains strictly less than $2 k - 2$ colors.
  
  \textbf{Case (2):} If such $V_i$ does exist and its respective $r_i = 2$, then we know that all $V_j$ with odd $r_j \neq 2$ had to be fully colored to force Bob to play the first move in $V_i$, so it implies that $n$ has to be odd.
  Here we basically repeat the arguments from the previous proof: Bob's strategy ensures that in each set $V_j$ with $r_j > 2$ the players use at least $\lceil\frac{r_j}{2}\rceil$ colors and in each set $V_j$ with $r_j = 2$ the players use at least one color i.e. also at least $\lceil\frac{r_j}{2}\rceil$ colors.
  This, combined with the fact that the fixing move in this case is always played in a set $V_j$ with $r_j = 2$, guarantees that at least $\sum_{j} \lceil\frac{r_j}{2}\rceil - 1$ colors are needed before the fixing move.
  
  Thus, if $n$ is odd, then more than $\min\{2 k - 3, \sum_{j} \lceil\frac{r_j}{2}\rceil - 1\}$ colors are needed to complete the coloring. On the other hand, if $n$ is even, then only the first case may occur, so more than $2 k - 3$ colors are needed.
\end{proof}

\begin{corollary}
  \label{col:b1-3}
  For $K_{r_1, \ldots, r_k}$ with $k \ge 3$ and $r_j = 3$ for some $j \in \{1, 2, \ldots, k\}$ it holds that $\chi_g(K_{r_1, \ldots, r_k}) \ge \min\{2 k - 2, \sum_i \left\lceil\frac{r_i}{2}\right\rceil\}$ colors.
  
  Additionally, if $n$ is even, then $\chi_g(K_{r_1, \ldots, r_k}) \ge 2 k - 2$.
\end{corollary}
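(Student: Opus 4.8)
The plan is to derive the corollary immediately from \Cref{lem:b1-3} by invoking the general principle stated at the end of the notation and concepts subsection: if Bob has a strategy that forces the players to use $l$ distinct colors while at least one set $V_i$ is still uncolored --- that is, before the fixing move --- then $l$ colors can never suffice to finish the game, so $\chi_g(K_{r_1, \ldots, r_k}) > l$, and hence (as $\chi_g$ is an integer) $\chi_g(K_{r_1, \ldots, r_k}) \ge l + 1$.

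First I would apply this with $l = \min\{2k - 3,\ \sum_i \lceil r_i/2 \rceil - 1\}$, which is exactly the number of colors for which \Cref{lem:b1-3} guarantees a win for Bob. This gives
\[
\chi_g(K_{r_1, \ldots, r_k}) \ge \min\left\{2k - 3,\ \sum_i \left\lceil \tfrac{r_i}{2} \right\rceil - 1\right\} + 1 .
\]
Then I would rewrite the right-hand side using the elementary identity $\min\{a, b\} + 1 = \min\{a + 1, b + 1\}$ (valid for integers $a$, $b$), obtaining $\min\{2k - 2,\ \sum_i \lceil r_i/2 \rceil\}$; this establishes the first inequality of the corollary.

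For the additional claim, I would invoke the ``Moreover'' part of \Cref{lem:b1-3}: when $n$ is even, Bob wins against any color set of size at most $2k - 3$, so the same reduction yields $\chi_g(K_{r_1, \ldots, r_k}) > 2k - 3$, i.e.\ $\chi_g(K_{r_1, \ldots, r_k}) \ge 2k - 2$.

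Since the whole argument is a one-line deduction from \Cref{lem:b1-3}, I do not expect any genuine obstacle; the only thing to handle with a little care is the translation of ``Bob wins with at most $l$ colors'' into the bound ``$\chi_g \ge l + 1$'' and the corresponding shift of the constant inside the $\min$.
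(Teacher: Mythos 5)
Your proposal is correct and matches the paper's (implicit) derivation: the corollary is stated without a separate proof precisely because it follows from \Cref{lem:b1-3} by the general principle that a winning strategy for Bob with $l$ colors gives $\chi_g > l$, together with the shift $\min\{a,b\}+1=\min\{a+1,b+1\}$. Nothing further is needed.
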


\section{Conjectures for the general case}

When we allow for the appearance of the singletons in the complete multipartite graph the problem becomes more complicated.
The strategies presented above can still be optimal for some graphs, nevertheless this claim no longer holds in general.

For example, let us consider a graph $K_{r, r, 1}$ (i.e. with $k = 3$) for $r \ge 5$.
On the one hand, strategies $(A1)$ and $(A3)$ would require Alice to have $2 k - 1 = 5$ and $2 \left\lceil\frac{r}{2}\right\rceil + 1 \ge 7$ colors available, respectively. The upper bounds follow \Cref{col:a1,col:a3}, and if Bob plays according to the strategy $(B1)$, then he can enforce both.
Moreover, the strategy $(A2)$ is inapplicable in this case, since $K_{r, r, 1}$ does not contain any set of size exactly $3$.
On the other hand, it is clear that the optimal strategy by Alice is to color the singleton first, let Bob begin coloring $V_1$ or $V_2$, and then seal the total number of colors at $3$ by playing the fixing move at any vertex from the remaining uncolored set and coloring it using a new color.
Thus, since it is straightforward that Alice cannot win with just two colors, it follows that $\chi_g^\infty(K_{r, r, 1}) = 3$.

In general, this would suggest that Alice should prefer coloring uncolored singletons until they are not available.
Thus, we can define strategies $(A1')$--$(A2')$ by modifying the respective original strategies with a rule ``if there is an uncolored singleton, color it with a new color'' having the highest priority (in case of $(A2)$ this rule should have a priority just below a rule forcing Alice to play in $V_j$).
Note that the eventual strategy $(A3')$ would be just identical to $(A3)$ in the first move, but may differ from then on.

However, it turns out that it is not the case:
\begin{theorem}
    The set of strategies $\{(A1), (A2), (A3), (A1'), (A2'), (A3')\}$ is not optimal for Alice for all complete multipartite graphs.
\end{theorem}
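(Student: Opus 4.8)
The plan is to produce a single complete multipartite graph with singletons on which \emph{none} of the six strategies is optimal for Alice. I claim $G=K_{2,1,1,1,1}$ works; more generally, one may take $K_{2,1,\dots ,1}$ with $2m$ parts of size $1$ for any $m\ge 2$. The underlying idea is that the ``colour an uncolored singleton first'' rule of $(A1')$ over-commits Alice to the singletons and lets Bob quietly place two distinct colours on the part $V_1$ of size $2$, which is fatal once the colour budget is tight.

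The first step is to show $\chi_g(G)=5$. The lower bound is immediate: $G$ contains $K_5$ (the four singletons together with one vertex of $V_1$), so $\chi_g(G)\ge\chi(G)=5$. For the upper bound I would exhibit a winning strategy for Alice with five colours: on the first move colour one singleton; thereafter, if Bob has just coloured exactly one of the two vertices of $V_1$, immediately colour the other vertex of $V_1$ with the same colour (always legal, since the two vertices of $V_1$ are non-adjacent and that colour already avoids every coloured singleton); otherwise colour any uncolored singleton with any legal colour, or, if every singleton is coloured, a vertex of $V_1$. The verification is a short case analysis on Bob's first reply; its substance is the invariant ``$V_1$ never receives two distinct colours'', and since four pairwise adjacent singletons together with a monochromatic $V_1$ use exactly five colours, Alice is never blocked.

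The second step disposes of four of the strategies for free: $(A2)$ and $(A2')$ are undefined for $G$ because $G$ has no part of size exactly $3$, and $(A3)$, $(A3')$ are undefined because $|V(G)|=6$ is even. So it remains to rule out $(A1)$ and $(A1')$, and here I would show that Bob wins against either of them with only five colours, so that each of $(A1)$, $(A1')$ actually requires at least six colours on $G$. Bob's counter-play is: on move $2$ colour a vertex $a$ of $V_1$ (if Alice has not already coloured a vertex of $V_1$; if she has, colour the other vertex of $V_1$ with a different colour and the damage is done); on move $4$ colour the remaining vertex of $V_1$ with a colour different from that of $a$. Both moves are legal because at those moments at most two singletons are coloured, so the targeted vertex has at most two forbidden colours out of five and Bob has room to choose. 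The point is that neither $(A1)$ nor $(A1')$ ever permits Alice to reply inside $V_1$ while an uncolored singleton still exists --- for $(A1')$ this is its highest-priority rule, and for $(A1)$ it is the rule that a new colour must be placed in some uncolored \emph{set} whenever one exists, and after Bob's move $2$ the set $V_1$ is merely partially coloured. Once $V_1$ carries two distinct colours, the four singletons --- pairwise adjacent and each adjacent to both vertices of $V_1$ --- need $4+2=6$ mutually compatible colours, so with only five available the last singleton has no legal colour and Bob wins. Hence no strategy in $\{(A1),(A2),(A3),(A1'),(A2'),(A3')\}$ colours $G$ with $\chi_g(G)=5$ colours, which is the assertion.

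The step I expect to be the main obstacle is the $(A1)$ half of the third step: since $(A1)$ leaves choices unspecified (``pick any vertex in any uncolored set''), Bob's win must be argued uniformly over every resolution of Alice's choices. In particular one must check that Alice cannot forestall Bob by colouring a vertex of $V_1$ herself and then completing $V_1$ monochromatically --- this fails precisely because $(A1)$ forces her away to an uncolored set on her very next move, which is exactly the window Bob uses to colour the second vertex of $V_1$ with a fresh colour. By comparison, computing $\chi_g(G)=5$ and noting the inapplicability of $(A2),(A2'),(A3),(A3')$ are routine.
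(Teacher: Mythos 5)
Your proof is correct, but it takes a genuinely different route from the paper. You pick the single small graph $K_{2,1,1,1,1}$, compute $\chi_g=5$ directly (the $K_5$ lower bound plus an explicit Alice strategy that keeps $V_1$ monochromatic), dispose of $(A2),(A2'),(A3),(A3')$ on the grounds that they are simply not defined when there is no part of size $3$ and $n$ is even, and then beat $(A1)$ and $(A1')$ with an explicit Bob counter-play that plants two distinct colors on $V_1$ while Alice is locked into opening uncolored singletons; the case analysis is complete, including the branch where Alice opens $V_1$ herself on move one, and the counterexample works under either reading of the theorem (whether inapplicable strategies count as non-optimal or are excluded, since both applicable strategies fail). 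The paper instead uses the family $K_{4,3,\ldots,3,1,1}$ for $k\ge 6$, shows each of the six strategies forces at least $2k-3$ colors, and then constructs a more elaborate hybrid strategy for Alice achieving $2k-4$. Your approach is more elementary and self-contained, and it yields a concrete six-vertex witness; the paper's approach is arguably a stronger demonstration in that (for suitable $k$) the strategies genuinely apply and are each outperformed by a nontrivial alternative, rather than being ruled out partly by inapplicability. Both establish the stated theorem.
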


\begin{proof}
    Consider a graph $K_{4, 3, \ldots, 3, 1, 1}$ for $k \ge 6$ (i.e. having at least $3$ triples).

    For strategies $(A1)$ and $(A1')$ note that Bob can guarantee that he starts coloring at most one set: either when Alice colors a singleton in her first move (when there are no partially colored sets), or when she first colors a vertex in a triple and next colors both singletons. Thus, Alice starts coloring at least $k - 1$ sets and therefore there are at least $2k - 3$ colors used in total.
    Note that the same argument holds also for strategies $(A2)$ and $(A2')$, only here we are certain that Alice starts coloring with some triple so the former case cannot occur.

    If Alice uses the strategy $(A3)$ or $(A3')$, then she starts with coloring one of the singletons, but Bob can do the same. Next, it is easy to spot that there remains a game on a graph $K_{4, 3, \ldots, 3}$ with $k - 2$ parts.
    From \Cref{lem:a3} we know that by playing the strategy $(A3)$ on such a graph $\sum_{i = 1}^{k - 2} \left\lceil\frac{r_i}{2}\right\rceil = 2 k - 4$ would be sufficient. However, e.g. by using the strategy $(B1)$ from the second move on Bob can guarantee that each such part but the last one gets assigned at least two distinct colors:
    \begin{itemize}
        \item if Alice played the first move in any $V_i$, he can respond in the same set with a new color,
        \item if Bob played the first move in any $V_i$, then $(A3)$ forces Alice to play there as well, so Bob can again play in the same set with a new color.
    \end{itemize}
    Thus, Alice needs at least $2 k - 3$ colors to make her strategy work.

    Finally, there is a better strategy for Alice, which is somewhat more complex:
    \begin{itemize}
        \item in the first move color a singleton with a new color,
        \item if Bob responded in his first move in a singleton, use strategy $(A2)$,
        \item otherwise, if Bob picked any vertex from $V_j$ with $r_j = 3$ in his first move, assign a new color to the remaining singleton and then use strategy $(A2)$.
		\item otherwise, pick any vertex from $V_1$ and then:
		\begin{enumerate}[label=(\alph*)]
			\item if Bob picked singleton or vertex from triple in his second move, use strategy $(A1')$,
			\item if Bob picked vertex from $V_1$, pick the last uncolored vertex from $V_1$ and use strategy $(A2')$ from now,
		\end{enumerate}
    \end{itemize}
    Observe that by using this strategy Alice and Bob used at most $2k - 4$ colors: in the former case they used $2$ colors for singletons and $2 (k - 2) - 2$ for the subsequent game on $K_{4, 3, \ldots, 3}$ subgraph; in the second case Alice used $2$ colors for singletons and $2(k-2) - 2$ colors for the rest -- we can treat first Bob move as Alice's and proof goes the same as in \Cref{lem:a2}. In the latter case, in subcase $(a)$, after Bob's second move they used $3$ colors and there are $k-3$ uncolored sets, so till the fixing move at most $2(k-3)-1$ new colors will be introduced. And finally in subcase $(b)$, after Alice's third move there is a triple with one colored vertex (no matter who colored it), $k-4$ uncolored triples and all other sets are fully colored. This means that they will use $2$ colors for singletons, $2$ colors for $V_1$ and at most $2(k-3)-2$ colors for triples.
    Thus, $\chi_g(K_{4, 3, \ldots, 3, 1, 1}) \le 2 k - 4$ -- and thus none of the above-mention strategies is optimal.
\end{proof}

Similarly, the strategy $(B1)$ fails to achieve the optimal coloring for $K_{2, 2, 1, \ldots, 1}$ with an even $k \ge 4$, since it requires using at most $k$ colors: when Alice starts by coloring a singleton, Bob responds by coloring a vertex in $V_1$ (or $V_2$, without loss of generality). Then Alice uses the same color in the other vertex in $V_1$, Bob picks a vertex from $V_2$ and colors it using a new color, Alice responds again by coloring the other vertex in $V_2$ with the same color as Bob, and they just have to keep coloring the singletons with new colors.
It can be easily verified that $\chi_g(K_{2, 2, 1, \ldots, 1}) = k + 1$.

However, we can modify $(B1)$ to prioritize choosing uncolored singletons over uncolored sets of size $2$:
\begin{definition}
  Let $(B1')$ be the following strategy for Bob for $K_{r_1, \ldots, r_k}$:
  \begin{enumerate}
      \item if Alice picked a vertex in $V_i$ and $V_i$ is partially colored, pick any uncolored vertex from $V_i$,
      \item otherwise, if Alice picked a vertex in $V_i$, it is now fully colored and there is any partially colored set, then choose any partially colored $V_j$ with the smallest number of uncolored vertices and pick any uncolored vertex from $V_j$,
      \item otherwise, if Alice picked a vertex in $V_i$, it is now fully colored, and there is are uncolored set with size at least $3$, then choose any vertex from the largest uncolored $V_j$,
      \item otherwise, pick any vertex from the smallest uncolored $V_j$.
  \end{enumerate}
  Assign to a chosen vertex a new color if you can. Otherwise, reuse a color that already appeared for some other vertex in the respective set for the chosen vertex.
\end{definition}
Note that for complete multipartite graphs with $r_k \ge 2$ the strategies $(B1)$ and $(B1')$ are identical.

We did not find any counterexample on small graphs, either by reasoning on various specific subclasses, or by a computer-aided exhaustive search on small graphs (i.e. up to $20$ vertices), therefore we conjecture the affirmative answer for the following problem:
\begin{conjecture}
    Is the strategy $(B1')$ is optimal for Bob for all complete multipartite graphs?
\end{conjecture}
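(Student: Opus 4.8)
The plan is to prove the affirmative: that $(B1')$ is optimal for Bob on every complete multipartite graph, i.e.\ that against $(B1')$ Alice can never finish with fewer than $\chi_g(K_{r_1, \ldots, r_k})$ colors. The paper notes that $(B1')$ coincides with $(B1)$ whenever $r_k \ge 2$, so by \Cref{col:b1-main,col:b1-2,col:b1-3} together with \Cref{col:a1,col:a2,col:a3} the claim already holds for every singleton-free graph; the whole problem therefore reduces to the case $l_1 \ge 1$. Since proving a Bob strategy optimal means matching the number of colors it forces against an Alice upper bound, the attack must pin down the exact value of $\chi_g$ in the presence of singletons, and the natural device is induction on $l_1$. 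The mechanism that $(B1')$ exploits is already visible in the $K_{2, 2, 1, \ldots, 1}$ example: by rule (4) Bob grabs an uncolored singleton, with a forced new color, rather than opening a set of size $2$, which hands the initiative on the size-$2$ and larger sets to Alice; Bob then fills each such set that Alice opens with a forced new color, so every set Alice starts is charged two distinct colors rather than one.

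For the lower bound I would run the same bookkeeping as in \Cref{lem:b1-main,lem:b1-2,lem:b1-3}: freeze the position immediately before the fixing move and count, set by set, the distinct colors that $(B1')$ has forced to appear. The new structural input is a singleton invariant analogous to the B-singleton invariant of \Cref{lem:b1-3}: after each of Bob's moves at most one set is a B-singleton, and---this is exactly what rule (4) buys over $(B1)$---Bob never voluntarily opens a set of size $\ge 2$ while an uncolored singleton remains. From this one deduces that each singleton contributes exactly one color while every non-singleton set other than the fixing set contributes at least two, with the precise tally depending on who reaches the last untouched part. Which player is forced to make the fixing move is governed by the parity of the move count, hence by the parity of $l_1$: the contrast between $K_{2, 2, 1}$ (odd number of singletons, Bob forced to open the final size-$2$ set, value $3$) and $K_{2, 2, 1, 1}$ (even, Alice forced to open it, value $5$) is the prototype, and the case split must track $l_1 \bmod 2$ alongside the parity of $n$ and the presence of a size-$3$ part already handled in the singleton-free lemmas.

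The matching upper bound must be developed in lockstep with the formula, and this is where I expect the real difficulty. The better Alice strategy sketched in the proof of the preceding theorem---open a singleton, branch on Bob's reply, then peel off a second singleton and drop into $(A2)$/$(A2')$---is the right template: Alice spends her singleton moves to fix a favourable parity and to convert Bob's reply into a wasted move, after which the residual position is a singleton-free instance whose cost is known from \Cref{lem:a1,lem:a2,lem:a3}. I would formalize this as a reduction that removes singletons in pairs (one colored by Alice, one response extracted from Bob) until none remain, carrying along the parity of $n$ and the existence of a size-$3$ part so as to invoke the correct residual bound, and then verify that the resulting Alice value coincides in every regime with the $(B1')$ lower bound of the previous paragraph. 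The crux is precisely the endgame tug-of-war over the last one or two singletons when $l_1$ is odd or small relative to $l_2$: it is here that all six naive Alice strategies were shown to fail, and here that Alice and Bob genuinely compete for the final singleton to decide the fixing move. Closing this case analysis, and checking that the two bounds meet so that $\chi_g$ is pinned down and $(B1')$ is thereby forced to be optimal, is the main obstacle; the exhaustive check up to $20$ vertices reported in the paper is strong evidence that no further exceptional regimes intrude and that the induction on $l_1$ will close.
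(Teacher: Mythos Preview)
The statement you are attempting to prove is not a theorem in the paper: it is explicitly posed as an \emph{open problem} (the \texttt{conjecture} environment is defined via \verb|\newtheorem{conjecture}{Open problem}|). The paper offers no proof; it only reports that no counterexample was found by hand or by exhaustive search on graphs with at most $20$ vertices, and on that basis conjectures the affirmative. There is therefore nothing in the paper to compare your argument against.

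Your proposal is not a proof either, and you are candid about this: it is a plan. The plan is reasonable as far as it goes---reduce to the singleton case, run the $(B1')$ bookkeeping to get a lower bound, peel off singletons in pairs on Alice's side to get an upper bound, and match---but the decisive step is precisely the one you flag as ``the main obstacle'' and then do not carry out. You never write down the conjectured formula for $\chi_g(K_{r_1,\ldots,r_k})$ with singletons, you never verify that the $(B1')$ count and the Alice reduction land on the same number in every regime, and the ``endgame tug-of-war'' case analysis is left entirely open. The paper's own example $K_{4,3,\ldots,3,1,1}$ already shows that Alice's optimal play is not a simple singleton-peeling followed by one of $(A1)$--$(A3)$ or their primed variants; it requires branching on Bob's first two replies and switching between several sub-strategies. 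There is no evidence in your sketch that the induction on $l_1$ actually closes once these branchings proliferate, and the computer check up to $20$ vertices is consistency evidence, not a proof. Until the formula is stated and both bounds are established in full, this remains what the paper says it is: an open problem.
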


Finally, we would like to pose an open question for the general case: does there exist a quite simple formula like the one presented in \Cref{tab:results}, or is there a more complex dependency on the graph structure? In other words,
\begin{conjecture}
    Find the exact formula for $\chi_g$ for all complete multipartite graphs.
\end{conjecture}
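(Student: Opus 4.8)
The remaining difficulty is the presence of singleton parts, and the plan is to reduce such graphs to the singleton-free case, already settled by \Cref{tab:results}, together with a ``Bob-moves-first'' variant of it. Write $l_1$ for the number of singleton parts, put $k' = k - l_1$, and let $G' = K_{r_1,\dots,r_{k'}}$ be the complete multipartite graph obtained from $K_{r_1,\dots,r_k}$ by deleting every singleton, so that every part of $G'$ has at least two vertices. The starting observation is that a singleton is adjacent to all other vertices, hence its colour is \emph{private}: it never reappears and can never be reused, so the $l_1$ singletons always absorb exactly $l_1$ distinct colours, and these colours are forbidden on all of $G'$. Letting $\chi_g^{B}(H)$ denote the game chromatic number of $H$ in the variant where Bob makes the opening move, I would conjecture, and then try to prove, a formula of the form
\begin{equation*}
  \chi_g(K_{r_1,\dots,r_k}) = l_1 + \begin{cases} \chi_g(G') & \text{if $l_1$ is even,}\\ \chi_g^{B}(G') & \text{if $l_1$ is odd,}\end{cases}
\end{equation*}
the heuristic being that once the singletons are exhausted the residual game on $G'$, now played with $l_1$ fewer usable colours, opens with Alice to move exactly when $l_1$ is even. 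This is consistent with the known data: $\chi_g(K_{r,r,1}) = 1 + \chi_g^{B}(K_{r,r}) = 1+2 = 3$; $\chi_g(K_{2,2,1,\dots,1}) = (k-2) + \chi_g(K_{2,2}) = k+1$ for even $k$; $\chi_g(K_{2,2,2,1}) = 1 + \chi_g^{B}(K_{2,2,2}) = 1+3 = 4$; $\chi_g(K_{2,2,2,1,1}) = 2 + \chi_g(K_{2,2,2}) = 7$; and $\chi_g(K_{4,3,\dots,3,1,1}) = 2 + \chi_g(K_{4,3,\dots,3}) = 2k-4$, matching the upper bound of the non-optimality Theorem above.

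The first technical step is to determine $\chi_g^{B}(H)$ for every singleton-free complete multipartite graph $H$. I expect this to follow the same fixing-move bookkeeping as \Cref{lem:a1,lem:a2,lem:a3,lem:b1-main,lem:b1-2,lem:b1-3}, only with the parity of the move counter shifted by one; note that $\chi_g^{B}$ can be strictly smaller than $\chi_g$ (already $\chi_g^{B}(K_{2,2,2}) = 3 < 5 = \chi_g(K_{2,2,2})$), because when Bob opens a part himself Alice can ``close'' it before he can force a second colour into it. The second step is the lower bound, for which I would first establish the conjecture that $(B1')$ is optimal for Bob. The decisive feature of $(B1')$ is that its rules (3) and (4) make Bob exhaust the uncolored parts of size at least $3$ before ever touching a singleton, and never start a size-$2$ part while a singleton is still uncolored; together with the invariant, as in \Cref{lem:b1-main,lem:b1-3}, that at most one set is ``first-coloured-only-by-Bob'' at any time, this allows one to rerun the three-case analysis of \Cref{lem:b1-2,lem:b1-3} on $G'$ and conclude that at the fixing move at least $l_1$ colours (on the singletons) plus the appropriate singleton-free lower bound for $G'$ — with Alice or Bob as the opening mover according to the parity of $l_1$ — have been used.

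For the matching upper bound I would build a family of \emph{adaptive} strategies for Alice of a ``singleton-and-reduce'' type: she interleaves colouring singletons with new colours and running the appropriate one of $(A1)$, $(A2)$, $(A3)$ (or their Bob-first analogues) on $G'$, branching on Bob's early moves exactly as the explicit strategy in the proof of the non-optimality Theorem branches on Bob's first two replies. Correctness would be reduced to \Cref{lem:a1,lem:a2,lem:a3} (and their Bob-first counterparts) applied to $G'$ with $X - l_1$ usable colours, where $X$ is the size of the colour set, treating any move Bob makes inside $G'$ while singletons remain as the opening move of the residual $G'$-game — precisely the ``treat Bob's move as Alice's'' device already used in the proofs of \Cref{lem:a2} and of that Theorem. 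A naive ``colour all singletons first, then play $G'$'' rule does not suffice, and this is exactly what the Theorem already shows: via rule (3) of $(B1')$ Bob himself opens the large parts of $G'$ in the middle of the singleton phase, so Alice must react, sometimes deferring a singleton and sometimes opening a part of $G'$ out of turn, in order to land the residual $G'$-game with the intended player to move.

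The main obstacle is this upper bound — pinning down Alice's adaptive rule and proving it optimal against \emph{every} Bob strategy, not only $(B1')$. The Theorem already shows that none of the obvious singleton-priority variants of $(A1)$--$(A3)$ works and that even the single family $K_{4,3,\dots,3,1,1}$ needs a four-way case split on Bob's opening moves; the general statement needs such a decision tree uniformly in $l_1$ and in the structure of $G'$, and it must be reconciled with the piecewise $\min\{2k-1,\sum_i\lceil r_i/2\rceil\}$-type thresholds that already make the singleton-free formula non-uniform. A secondary difficulty is confirming that the parity of $l_1$ is truly the right parameter, in particular ruling out that Bob can profitably refuse to cooperate with the singleton phase — forcing Alice to spend her own moves on singletons and thereby flip the parity of the $G'$-game; I would expect to settle this by a pairing argument showing Alice can always restore the favourable parity at no additional cost in colours. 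Finally, a few boundary cases need separate treatment: $G'$ empty (so $K_{r_1,\dots,r_k}$ is a complete graph, trivial), $k' = 1$ (one large part plus singletons, essentially a book graph), and small $k'$, where the Bob-first values for $G'$ and the $\min$-thresholds interact.
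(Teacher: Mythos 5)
This statement is one of the paper's two open problems; the paper supplies no proof (and not even a conjectured formula), so there is nothing to compare your attempt against --- what you have written is a research programme, not a proof, and you say so yourself. On the positive side, your conjectured reduction $\chi_g(K_{r_1,\dots,r_k}) = l_1 + \chi_g(G')$ or $l_1 + \chi_g^{B}(G')$ according to the parity of $l_1$ is consistent with every data point the paper provides ($K_{r,r,1}$, $K_{2,2,1,\dots,1}$, $K_{4,3,\dots,3,1,1}$) and with further small cases one can check by hand (e.g.\ $K_{3,1,1}$, $K_{2,2,2,1}$, $K_{2,2,2,2,1}$), and the idea of a ``Bob-first'' auxiliary invariant is a sensible way to organize the singleton case. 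But be aware that for $K_{4,3,\dots,3,1,1}$ the paper only establishes the \emph{upper} bound $2k-4$, so your claim that the formula ``matches'' that family is only half-verified.

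The genuine gaps are the two you name, and they are not minor. First, the residual game is not a clean instance of the game on $G'$: nothing forces either player to exhaust the singletons before touching $G'$, so when the last singleton is finally colored, several parts of $G'$ may already be partially colored, started by either player, with various multiplicities of colors --- the paper's own non-optimality theorem is precisely a demonstration that Bob will open large parts of $G'$ mid-phase and that Alice must branch on this. Consequently $\chi_g(G')$ and $\chi_g^{B}(G')$ are not the right invariants as stated; you would need values of the game from arbitrary partially-colored positions of $G'$, i.e.\ position-sensitive versions of \Cref{lem:a1,lem:a2,lem:a3,lem:b1-main,lem:b1-2,lem:b1-3}, and the fixing-move bookkeeping in those proofs does not transfer verbatim once a part can be ``inherited'' already holding two colors. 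Second, the parity claim itself is fragile: your own examples show Bob sometimes profits from mirroring Alice's singleton (as in $K_{2,2,2,1,1}$, to hand the $G'$-game back to Alice) and sometimes from refusing to touch singletons at all (as in $K_{4,3,\dots,3,1,1}$), so ``who opens the residual game'' is itself the object of a combinatorial fight rather than a function of $l_1$; a pairing argument that Alice can always restore the favourable parity ``at no cost'' is exactly the statement that needs proof, not a tool for proving it. As a plan this is a reasonable and well-motivated attack on the open problem, but no step of it is established here.
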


\bibliography{games-multipartite.bib}

\end{document}